\documentclass{article}

\usepackage{color}
\setlength{\textwidth}{14cm} \setlength{\oddsidemargin}{1cm}
\setlength{\evensidemargin}{0cm} \setlength{\topmargin}{0cm}
\setlength{\textheight}{22cm} \catcode`@=11
\usepackage{color}
\usepackage{amsthm,amssymb,amsfonts,mathrsfs}
\usepackage{amsmath}
\catcode`@=11 \@addtoreset{equation}{section}

\catcode`@=12

\newtheorem{theorem}{Theorem}[section]
\newtheorem{proposition}{Proposition}[section]
\newtheorem{lemma}{Lemma}[section]

\theoremstyle{definition}

\newtheorem{remark}{Remark}

\def\R{{\mathbb{R}}}

\newcommand{\ds}{\displaystyle}

\usepackage[colorlinks=true,urlcolor=red,linkcolor=blue,citecolor=red,bookmarks=red]{hyperref}
\title{Null controllability for the singular heat equation with a memory term}
\author{{\sc Brahim Allal}
\thanks{The author thanks the MAECI (Ministry of Foreign
Affairs and International Cooperation, Italy) for funding that
greatly facilitated scientific collaboration between Universit\'e Hassan $1^{er}$
(Morocco) and Universit\`{a} di Bari Aldo Moro (Italy).}
\\
Facult\'e des Sciences et Techniques\\
Universit\'e Hassan 1er, 
Laboratoire MISI,\\ B.P. 577, 
Settat 26000, Morocco
\\ email: b.allal@uhp.ac.ma
\\
{\sc Genni Fragnelli}
\thanks{The author is a member of the Gruppo Nazionale per l'Analisi Ma\-te\-matica, la Probabilit\`a e le loro Applicazioni (GNAMPA) of the Istituto Nazionale di Alta Matematica (INdAM) and she is supported by the FFABR {\it Fondo per il finanziamento delle attivit\`a base di ricerca} 2017, by the INdAM - GNAMPA Project 2019 {\it Controllabilit\`a di PDE in modelli fisici e in scienze della vita},  by  Fondi di Ateneo 2015/16 of the University of  Bari {\em Problemi differenziali non linearii} and by PRIN 2017-2019 {\it Qualitative and quantitative aspects of nonlinear PDEs}.}
\\
Dipartimento di Matematica\\ Universit\`{a} di Bari Aldo Moro\\
Via
E. Orabona 4\\ 70125 Bari - Italy\\ email: genni.fragnelli@uniba.it\\
{\sc  Jawad Salhi}\\
Facult\'e des Sciences et Techniques,\\
Universit\'e Hassan 1er, Laboratoire MISI,\\
B.P. 577, Settat 26000, Morocco\\
email: sj.salhi@gmail.com}

\date{}
\begin{document}

\maketitle

\begin{abstract}
In this paper we focus on the null controllability problem for the heat equation with the so-called inverse square potential and a memory term.
To this aim, we first establish the null controllability for a nonhomogeneous singular heat equation by a new Carleman inequality with weights
which do not blow up at $t=0$. Then the null controllability property is proved for the singular heat equation with memory under a condition on the
kernel, by means of Kakutani's fixed-point Theorem.
\end{abstract}

Keywords: Controllability, heat equation with memory, singular potential, Carleman estimates

MSC 2010: 93B05, 35K05, 35K67, 35R09

\section{Introduction}
\label{intro}

In this paper, we address the null controllability for the following singular heat equation with memory:
\begin{equation}\label{problem}
\left\{
  \begin{array}{ll}
y_t - y_{xx} - \ds\frac{\mu}{x^2}y  =\int_{0}^{t}a(t,s,x)y(s,x)\,ds + 1_{\omega} u, & (t,x) \in Q,\\
y(t,0)=y(t,1)=0, & t \in (0,T), \\
y(0,x)=y_0(x), & x \in (0,1),
  \end{array}
\right.
\end{equation}
where $y_0 \in L^{2}(0,1)$, $T>0$ is fixed, $\mu$ is a real parameter,
$Q:=(0,T) \times (0,1)$ and $1_{\omega}$ stands for a characteristic function of a nonempty open subset $\omega$ of $(0,1)$.
Here $y$ and $u$ are the state variable and the control variable respectively, $a$ is a given $L^{\infty}$ function defined on
$(0,T)\times Q$.

The analysis of evolution equations involving memory terms is a topic in continuous development.
In the last decades, many researchers have started devoting their attention to this branch of mathematics, motivated by  many applications in  modelling  phenomena in which the processes are affected not only by its current state but also by its history.
Indeed, there is a large spectrum of situations in which the presence of the memory may render the description of the phenomena more accurate.
This is particularly the case for models such as heat conduction in materials with memory, viscoelasticity, theory of population dynamics and nuclear reactors, where there is often a need to reflect the effects of the memory of the system (see for instance \cite{amendola,bloom,lak,pruss}).

Controllability problems for evolution equations with memory terms have been extensively studied in the past. Among other contributions, we mention \cite{barbu, fu, guerrero, halanay, HP2012, ivanov, lavanya, balachandran, TG2016} which, as in our case, deal with parabolic type equations. We also refer to \cite{pandolfi} for an overview of the bibliography on control problems for systems with persistent memory. The first results for a degenerate parabolic equation with memory can be found in \cite{Allal2020}.

In this work, for the first time to our knowledge, we study the null controllability for \eqref{problem}. We underline that here we consider not only a memory term but also  a singular potential one.
In other words, given any $y_0\in L^2(0,1)$, we want to show that there exists a control function $u\in L^2(Q)$
such that the corresponding solution $y$ to \eqref{problem} satisfies
$y(T,x) = 0$ for every $x\in [0,1]$. First results in this direction are obtained in \cite{VanZua2008} in the absence of a memory term when $\mu\leq \ds\frac{1}{4}$ (see also \cite{vz} for the wave and Schr\"{o}dinger equations and \cite{Cazacu2014} for boundary singularity).
Indeed, for the equation
\begin{equation}\label{4}
  u_t -
\Delta u-\mu\frac{1}{|x|^2}u=0, \quad (t,x) \in (0,T)\times \Omega,
\end{equation}
with associated Dirichlet boundary conditions in a bounded domain $\Omega\subset \R^N$ containing the singularity $x=0$ in the interior,  the value of the parameter $\mu$ determines the behavior of the
equation: if $\mu \le {1}/{4}$ (which is the optimal constant of the
Hardy inequality, see \cite{b}) global positive solutions exist, while, if $\mu >{1}/{4}$, instantaneous and complete blow-up occurs
(for other comments on this argument we refer to \cite{Van2011}). In the case of global positive solutions, hence if $\mu\leq\ds \frac{1}{4}$ and using Carleman estimates, in \cite{VanZua2008} it has been proved that such equations can be controlled (in
any time $T > 0$) by a locally distributed control. On the contrary, if $\mu>\ds\frac{1}{4}$, the null controllability fails as shown in \cite{Ervedoza2008}. After these first results, several other works followed extending them in various situations (see for instance \cite{biccari2019, BZ2016, Cazacu2014, F2016, FMpress, FM2020, FM2019, FM2018, FM2017, MarVan2019,Van2011}).

However, when $\mu=0$ and $a=1$, \eqref{problem} becomes the following control system
associated to the classical heat equation with memory:
\begin{equation}\label{problem0}
\left\{
  \begin{array}{ll}
y_t - y_{xx}  =\int_{0}^{t}y(s)\,ds + 1_{\omega} u, & (t,x) \in Q,\\
y(t,0)=y(t,1)=0, & t \in (0,T), \\
y(0,x)=y_0(x), & x \in (0,1).
  \end{array}
\right.
\end{equation}
In this case, as shown in \cite{guerrero,zhou}, there exists a set of initial conditions such that the null controllability property for
 \eqref{problem0} fails whenever the control region $\omega$ is fixed, independent of time. For some related works in this respect we also refer to \cite{chaves,HP2012,zhou2016}.

Nevertheless, since the positive controllability results are important in real world applications, it is natural to analyze whether it is possible that control properties for \eqref{problem} could be obtained. For this reason, under suitable conditions on the singularity parameter $\mu$ and on the kernel $a$, we establish that \eqref{problem} is null controllable.

Our approach is inspired from the techniques presented in the work \cite{TG2016} for
the Laplace operator, suitably adapted in order to deal with the additional inverse-square potential. In particular, the technique that we will use is based on appropriate Carleman estimates and on the fixed-point Theorem of Kakutani.

The paper is organized as follows: Section \ref{NHE} is devoted to the study of null controllability for a nonhomogeneous singular heat equation without memory via new Carleman estimates. In Section \ref{SHEM}, the null controllability for the singular heat equation with memory \eqref{problem} is proved.

A final comment on the notation: by $C$ we shall denote
universal positive constants, which are allowed to vary from line to
line.

\section{Nonhomogeneous singular heat equation}
\label{NHE}
In this section, we prove the null controllability for a nonhomogeneous singular heat equation using a new modified Carleman inequality. This null controllability result is the key tool for the controllability of the heat equation with memory. Thus, as a first step, we consider the following problem:
\begin{equation}\label{problem01}
\left\{
  \begin{array}{ll}
y_t - y_{xx} - \ds\frac{\mu}{x^2}y  =f + 1_{\omega} u(t), & (t,x) \in Q:=(0,T)\times(0,1),\\
y(t,0)=y(t,1)=0, & t \in (0,T), \\
y(0,x)=y_0(x), & x \in (0,1),
  \end{array}
\right.
\end{equation}
where $f\in L^2(Q)$ is a given source term.

Prior to null controllability is the well-posedness of \eqref{problem01}, a question
we address in the next subsection.
\subsection{Functional framework and well-posedness}
\label{CP}
We analyze here existence and uniqueness of solutions for the
heat problem \eqref{problem01}. To simplify the presentation, we first focus on the
well-posedness of the following inhomogeneous singular problem
\begin{equation}\label{problem1}
\left\{
  \begin{array}{ll}
y_t - y_{xx} -\ds \frac{\mu}{x^2}y  =f, & (t,x) \in Q,\\
y(t,0)=y(t,1)=0, & t \in (0,T), \\
y(0,x)=y_0(x), & x \in (0,1).
  \end{array}
\right.
\end{equation}
In this framework, in order to deal with the singularity of the potential, a fundamental tool is the very famous Hardy inequality.
To fix the ideas, we recall here the basic form of the Hardy inequality in dimension one
(see, for example, \cite[Theorem 327]{HLP1952} or \cite[Lemma 5.3.1]{Davies1995}):
\begin{equation}\label{hardyinequality}
\frac{1}{4}\int_{0}^{1}\frac{y^2}{x^2}\,dx \leq \int_{0}^{1} y^2_x\,dx,
\end{equation}
which is valid for every $ y\in H^1(0,1)$ with $y(0)=0$.

Now, for any $\mu\leq \ds\frac{1}{4}$, we define
\begin{align*}
H_{0}^{1,\mu}(0,1):= \Big\{ y \in L^2(0,1)\cap H^{1}_{loc}((0,1])\,&\mid z(0)=z(1)=0 ,\\
&\text{and}\quad \int_{0}^{1}(z_x^2 - \mu \frac{z^2}{x^2})\,dx<+\infty \Big\}.
\end{align*}
Note that $H_{0}^{1,\mu}(0,1)$ is a Hilbert space
obtained as the completion of $C^{\infty}_{c}(0,1)$, or $H^1_0(0,1)$, with respect to the norm
\begin{equation*}
\|y\|_{\mu}:= \Big(\int_{0}^{1}(y_x^2 - \mu \frac{y^2}{x^2})\,dx\Big)^{\frac{1}{2}}, \qquad \forall \; y\in H^1_0(0,1).
\end{equation*}
In the case of a sub-critical parameter $\mu < \ds\frac{1}{4}$, thanks to the Hardy inequality \eqref{hardyinequality},
one can see that $\|\cdot\|_\mu$ is equivalent to the standard norm of $H^1_0(0,1)$, and thus $H_{0}^{1,\mu}(0,1)=H_{0}^{1}(0,1)$.
In the critical case $\mu=\ds\frac{1}{4}$, it is proved
(see \cite{VazZua2000}) that this identification does not hold anymore and the space $H_{0}^{1,\mu}(0,1)$ is slightly (but strictly) larger than
$H_{0}^{1}(0,1)$.

Now, define the operator $A : D(A)\subset L^2(0,1) \rightarrow L^2(0,1)$ corresponding to the heat equation
with an inverse square potential in the following way:
\[Ay:= -y_{xx} - \ds\frac{\mu}{x^2}y\]
\[\forall \;y \in  D(A):= \left\{y \in H^2_{loc}((0,1])\cap H_{0}^{1,\mu}(0,1): y_{xx} +\ds \frac{\mu}{x^2}y \in L^2(0,1)\right\}.\]

In this context, $A$ is self-adjoint, nonpositive on $L^2(0,1)$ and it generates an analytic semi-group of contractions in $L^2(0,1)$ for the
equation \eqref{problem1} (see \cite{VazZua2000}). Consequently, the singular heat equation \eqref{problem1} is well-posed.
To be precise, the next result holds.
\begin{theorem}\label{prop} For all $f\in L^2(Q)$ and $y_0 \in L^2(0,1)$, there exists a unique solution
\[y \in \mathcal{W}:=C\big([0,T]; L^2(0,1)\big) \cap L^2 \big(0,T;H_{0}^{1,\mu}(0,1)\big)\]
of \eqref{problem1} such that
\begin{equation}\label{stima2w}
\sup_{t \in [0,T]}
\|y(t)\|^2_{L^2(0,1)}+\int_0^T\|y(t)\|^2_{\mu} dt \leq
C_T\left(\|y_0\|^2_{L^2(0,1)}+\|f\|^2_{L^2(Q)}\right),
\end{equation}
for some positive constant $C_T$. Moreover, if $y_0 \in H_0^{1,\mu}(0,1)$, then
\begin{equation}\label{regularity1}
y\in \mathcal{Z}:=H^1\big(0,T; L^2(0,1)\big) \cap L^2\big(0,T;D(A)\big)\cap C\big([0,T];H_{0}^{1,\mu}(0,1)\big),
\end{equation}
and there exists a positive constant $C$ such that
\begin{equation}\label{stima3w}
\begin{aligned}
\sup_{t \in [0,T]}\left(\|y(t)\|^2_{\mu}
\right)+ \int_0^{T}
\left(\left\|y_t\right\|^2_{L^2(0,1)} +
\left\|y_{xx} + \frac{\mu}{x^2}y\right\|^2_{L^2(0,1)}\right)dt
\leq C
\left(\|y_0\|^2_{\mu} +
\|f\|^2_{L^2(Q)}\right).
\end{aligned}
\end{equation}
\end{theorem}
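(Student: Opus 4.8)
The plan is to recast \eqref{problem1} as the abstract Cauchy problem $y'(t)+Ay(t)=f(t)$, $y(0)=y_0$, and to exploit the facts recalled above: $A$ is self-adjoint with $\langle Ay,y\rangle_{L^2}=\|y\|_\mu^2$, so that $-A$ is the dissipative generator of the analytic semigroup of contractions. Since the Hardy inequality \eqref{hardyinequality} makes the embedding $H_0^{1,\mu}(0,1)\hookrightarrow L^2(0,1)$ compact (and this persists in the critical case $\mu=\frac14$ by \cite{VazZua2000}), $A$ has compact resolvent; hence there is an orthonormal basis $\{\phi_k\}_{k\ge1}$ of $L^2(0,1)$ of eigenfunctions, $A\phi_k=\lambda_k\phi_k$, with $0\le\lambda_1\le\lambda_2\le\cdots\to+\infty$ and $\phi_k\in D(A)$. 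Writing $y_0=\sum_k y_0^k\phi_k$ and $f(t)=\sum_k f^k(t)\phi_k$, the candidate solution is $y(t)=\sum_k y^k(t)\phi_k$, where each coefficient solves the scalar problem $\dot y^k+\lambda_k y^k=f^k$, $y^k(0)=y_0^k$, via Duhamel's formula. The advantage of this spectral construction is that it diagonalizes every norm in play, namely $\|y\|_{L^2}^2=\sum_k|y^k|^2$, $\|y\|_\mu^2=\sum_k\lambda_k|y^k|^2$ and $\|Ay\|_{L^2}^2=\sum_k\lambda_k^2|y^k|^2$, so that the inverse-square term never has to be integrated by parts near $x=0$.

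For the first estimate \eqref{stima2w}, I would multiply the $k$-th scalar equation by $y^k$ and sum over $k\le N$, which is the rigorous form of testing the PDE with $y$, to obtain $\tfrac12\tfrac{d}{dt}\|y\|_{L^2}^2+\|y\|_\mu^2=\langle f,y\rangle_{L^2}$. Bounding the right-hand side by $\tfrac12\|f\|_{L^2}^2+\tfrac12\|y\|_{L^2}^2$ and applying Gronwall's lemma controls $\sup_{t}\|y(t)\|_{L^2}^2$, while a further integration in time controls $\int_0^T\|y\|_\mu^2\,dt$; passing to the limit $N\to\infty$ by weak and weak-$\ast$ compactness then produces a solution in $\mathcal W$ satisfying \eqref{stima2w}. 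Uniqueness is immediate from the same estimate applied to the difference of two solutions (with $f=0$ and $y_0=0$), and the continuity $y\in C([0,T];L^2(0,1))$ follows from the Duhamel representation together with a standard density argument.

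For the improved regularity \eqref{stima3w}, assuming now $y_0\in H_0^{1,\mu}(0,1)=D(A^{1/2})$, I would instead test with $Ay$, i.e.\ multiply the $k$-th equation by $\lambda_k y^k$ and sum over $k\le N$. Using self-adjointness, $\langle y_t,Ay\rangle_{L^2}=\tfrac12\tfrac{d}{dt}\|y\|_\mu^2$, so that $\tfrac12\tfrac{d}{dt}\|y\|_\mu^2+\|Ay\|_{L^2}^2=\langle f,Ay\rangle_{L^2}\le\tfrac12\|f\|_{L^2}^2+\tfrac12\|Ay\|_{L^2}^2$. Absorbing the last term and integrating yields the bounds on $\sup_t\|y(t)\|_\mu^2$ and on $\int_0^T\|Ay\|_{L^2}^2\,dt=\int_0^T\|y_{xx}+\tfrac{\mu}{x^2}y\|_{L^2}^2\,dt$; since $y_t=-Ay+f$, the bound $\int_0^T\|y_t\|_{L^2}^2\,dt\le 2\int_0^T(\|Ay\|_{L^2}^2+\|f\|_{L^2}^2)\,dt$ follows at once, establishing \eqref{stima3w}. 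The membership $y\in C([0,T];H_0^{1,\mu}(0,1))$ is then obtained from $y\in H^1(0,T;L^2(0,1))\cap L^2(0,T;D(A))$ by the usual Lions--Magenes interpolation argument.

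The main obstacle is purely the singular potential $\mu/x^2$: every energy identity above is only formal at the level of the PDE and must be justified on the finite-dimensional Galerkin subspaces, where each $\phi_k\in D(A)$ and no boundary contribution at $x=0$ arises, before passing to the limit through the uniform a priori bounds. The genuinely delicate ingredient---already granted by the functional framework recalled above and by \cite{VazZua2000}---is that $A$ with the stated domain is self-adjoint and nonnegative with $D(A^{1/2})=H_0^{1,\mu}(0,1)$ and compact resolvent, which is exactly where the Hardy inequality is used to tame the inverse-square term. Once this is in hand, the remainder is the standard variational/spectral theory for abstract parabolic equations.
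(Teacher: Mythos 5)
Your proof is correct, but it takes a genuinely different route from the paper. The paper does not construct the solution at all: it quotes the semigroup generation result of \cite{VazZua2000} for existence and uniqueness, and then justifies exactly the same two energy estimates you derive (testing with $y$, then with $Ay=-y_{xx}-\frac{\mu}{x^2}y$) by first taking $y_0\in D(A)$, where the semigroup solution is regular enough for the formal computations to be licit, and concluding \eqref{stima2w} for $y_0\in L^2(0,1)$ by density of $D(A)$. You instead build the solution by spectral decomposition, which requires an ingredient the paper never uses: that $A$ has compact resolvent, i.e.\ that $H_0^{1,\mu}(0,1)\hookrightarrow L^2(0,1)$ is compact. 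For $\mu<\frac14$ this is immediate since $H_0^{1,\mu}=H_0^1$, but in the critical case $\mu=\frac14$ it is a nontrivial fact resting on improved Hardy--Poincar\'e inequalities (essentially \eqref{ihpi}, from \cite{Van2011}, which embeds $H_0^{1,1/4}$ into a weighted Sobolev space compact in $L^2$); you attribute this to \cite{VazZua2000}, which is acceptable but should be flagged as the one genuinely delicate point your method adds. What your approach buys is self-containedness and rigor at no extra analytic cost: the energy identities are exact on each finite-dimensional spectral subspace, every eigenfunction lies in $D(A)$, and no integration by parts near the singularity $x=0$ is ever needed; you also get $D(A^{1/2})=H_0^{1,\mu}(0,1)$ and the continuity $y\in C([0,T];H_0^{1,\mu}(0,1))$ cleanly by interpolation, a point the paper passes over quickly. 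What the paper's approach buys is economy: it needs only dissipativity and self-adjointness (no discreteness of the spectrum), so it would survive in situations where the resolvent is not compact, and it delegates all functional-analytic groundwork to the cited reference.
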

\begin{proof}
In \cite{VazZua2000}, the authors use semigroup theory to obtain the well-posedness result for the problem \eqref{problem1} (see also \cite{MarVan2019}).
Thus, in the rest of the proof, we will prove only \eqref{stima2w}-\eqref{stima3w}. First, being
$A$ the generator of a strongly continuous semigroup on
$L^2(0,1)$, if $y_0\in L^2(0,1)$, then the solution $y$ of \eqref{problem1} belongs to $C\big([0,T];L^2(0,1)\big) \cap L^2 \big(0,T;H^{1,\mu}_{0}(0,1)\big)$, while,
if $y_0\in D(A)$, then $y\in H^1\big(0,T; L^2(0,1)\big) \cap L^2\big(0,T;D(A)\big)$.

Now, by a usual energy method we shall prove \eqref{regularity1} and \eqref{stima3w}, from which the last required
regularity property for $y$ will follow by standard linear arguments.
First, take $y_0\in D(A)$ and multiply the equation of \eqref{problem1} by $y$.
By the Cauchy-Schwarz inequality we obtain for every $t\in (0,T]$,
\begin{equation}\label{derivo}
\frac{1}{2}\frac{d}{dt}\|y(t)\|^2_{L^2(0,1)}+
\|y(t)\|^2_{\mu}\leq
\frac{1}{2}\|f(t)\|^2_{L^2(0,1)}+\frac{1}{2}
\|y(t)\|^2_{L^2(0,1)}.
\end{equation}
From \eqref{derivo} and using Gronwall's inequality, we get
\begin{equation}\label{sottoderivo}
\|y(t)\|^2_{L^2(0,1)}\leq
e^T\left(\|y(0)\|^2_{L^2(0,1)}+\|f\|_{L^2(Q)}^2
\right)
\end{equation}
for every $t\leq T$. From
\eqref{derivo} and \eqref{sottoderivo} we immediately obtain
\begin{equation}\label{sottosotto}
\int_0^T\|y(t)\|^2_{\mu}dt\leq
C_T\left(\|y(0)\|^2_{L^2(0,1)}+\|f\|_{L^2(Q)}^2
\right)
\end{equation}
for some universal constant $C_T>0$. Thus, by
\eqref{sottoderivo} and \eqref{sottosotto}, \eqref{stima2w} follows
if $y_0\in D(A)$. Since $D(A)$ is dense in $L^2(0,1)$ (see \cite{Vanb,VazZua2000}),
the same inequality holds if $y_0\in L^2(0,1)$.

Now, multipling the equation by $\ds-y_{xx} - \frac{\mu}{x^2}y$, integrating on $(0,1)$ and using the Cauchy-Schwarz inequality,
we easily get
$$
\displaystyle \frac{d}{dt}\|y(t)\|^2_{\mu}+\|y_{xx}(t) + \frac{\mu}{x^2}y(t)\|^2_{L^2(0,1)}\leq
\|f(t)\|_{L^2(0,1)}^2
$$
for every $t\in [0,T]$, so that, as before, we find $C_T'>0$ such that
\begin{equation}\label{mah}
\|y(t)\|^2_{\mu}+\int_0^T\|y_{xx}(t) + \frac{\mu}{x^2}y(t)\|^2_{L^2(0,1)}dt
\leq
C_T'\left(\|y(0)\|_{\mu}+\|f\|_{L^2(Q)}^2\right)
\end{equation}
for every $t\!\leq \!T$.\!
Finally, from $y_t=y_{xx}+\ds\frac{\mu}{x^2}y+f$, squaring and integrating on $Q$,
we find
$$
\!\int_0^T\!\!\!\|y_t(t)\|_{L^2(0,1)}^2\!\leq\!
C\left(\!\int_0^T\|y_{xx}+\frac{\mu}{x^2}y\|^2_{L^2(0,1)}\!\!+\!\!\|f\|_{L^2(Q)}^2
\!\right),
$$
and together with \eqref{mah} we find
\begin{equation}\label{allafine}
\int_0^T\|y_t(t)\|_{L^2(0,1)}^2\leq
C\left(\|y(0)\|^2_{\mu}+\|f\|_{L^2(Q)}^2\right).
\end{equation}

In conclusion, \eqref{derivo}, \eqref{sottoderivo}, \eqref{mah} and \eqref{allafine}
give \eqref{stima2w} and \eqref{stima3w}. Notice that, \eqref{regularity1} and \eqref{stima3w}
hold also if $y_0 \in H_0^{1,\mu}(0,1)$.
\end{proof}

\subsection{Carleman estimates for a singular problem}
\label{CESHE}
In this subsection we prove a new Carleman estimate for the adjoint parabolic equation associated to \eqref{problem01}, which will provide that the nonhomogeneous singular heat equation \eqref{problem01} is null controllable.
Hence, in the following, we concentrate on the
next adjoint problem
\begin{equation}\label{adjproblem}
\left\{
  \begin{array}{ll}
-z_t - z_{xx} - \ds\frac{\mu}{x^2}z  =g, & (t,x) \in Q,\\
z(t,0)=z(t,1)=0, & t \in (0,T), \\
z(T,x)=z_T(x), & x \in (0,1).
  \end{array}
\right.
\end{equation}
Following \cite{VanZua2008}, for every $0<\gamma<2$, let us introduce the weight
function
\begin{equation}\label{phi}
\varphi(t,x):=\theta(t)\psi(x),
\end{equation}
where
\begin{equation}\label{weightfunc}
\psi(x):= \mathfrak{c}(x^2 - d), \quad \theta(t):=\left(\frac{1}{t(T-t)}\right)^k, \quad  k:=1+\frac{2}{\gamma},
\end{equation}
$\mathfrak{c}>0$ and $d>1$. A more precise restriction on the parameters $k, \mathfrak{c}$ and $d$  will be needed later.
Observe that
$\displaystyle\lim_{t\rightarrow 0^+}\theta(t)=\displaystyle\lim_{t\rightarrow T^-}\theta(t)= + \infty,$ and
$$
\psi(x)<0\qquad \text{for every}\quad x\in[0,1].
$$

Using the previous weight functions and the following  improved Hardy-Poincar\'e inequality given in \cite{Van2011}:

{\it
For all $\eta > 0$, there exists some positive
constant $C = C(\eta) > 0$ such that, for all $z\in C^{\infty}_c(0,1):$
\begin{equation}\label{ihpi}
\int_{0}^{1} x^{\eta} z_x^2\,dx \leq C\int_{0}^{1}\left(z_x^2 - \frac{1}{4}\frac{z^2}{x^2}\right)\,dx,
\end{equation}}
one can prove the following Carleman estimate for the case of a purely singular parabolic equation:

\begin{lemma}\label{Carleman001}\cite[Theorem 5.1]{Van2011}
Assume that $\mu\leq \ds\frac{1}{4}$. Then, there exists
$C > 0$ and $s_0> 0$ such that, for all $s\geq s_0$, every solution $z$ of \eqref{adjproblem}
satisfies
\begin{align}\label{Carl001}
&\int\!\!\!\!\!\int_{Q} s^{3} \theta^3 x^2 z^{2} e^{2s\varphi}\,dx\,dt
+\int\!\!\!\!\!\int_{Q} s \theta \big(z_x^2- \mu \frac{z^{2}}{x^2} \big)e^{2s\varphi}\,dx\,dt
+\int\!\!\!\!\!\int_{Q} s \theta  \frac{z^{2}}{x^\gamma} e^{2s\varphi}\,dx\,dt \nonumber\\
&\leq C\Big(\int\!\!\!\!\!\int_{Q} g^2 e^{2s\varphi}\,dx\,dt + \int_{0}^{T}s\theta z_x^2(t,1)e^{2s\varphi(t,1)}\,dx\,dt\Big).
\end{align}
\end{lemma}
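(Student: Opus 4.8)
The plan is to run the standard Carleman machinery based on conjugation by the exponential weight, adapted to the inverse-square potential. First I would set $w := e^{s\varphi}z$ and compute the equation for $w$: substituting $z = e^{-s\varphi}w$ into \eqref{adjproblem} and multiplying by $e^{s\varphi}$ produces a conjugated operator $P_s w = g_s$ with $g_s := e^{s\varphi}g$. The decisive step is to split $P_s = P_s^+ + P_s^-$ into its formally self-adjoint and skew-adjoint parts, collecting in $P_s^+$ the terms $-w_{xx} - s^2\varphi_x^2 w - \frac{\mu}{x^2}w$ and in $P_s^-$ the terms $-w_t + 2s\varphi_x w_x + s\varphi_{xx}w$, the zeroth-order remainder $s\varphi_t w$ being moved to the right-hand side. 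Taking $L^2(Q)$ norms gives
\[
\|P_s^+ w\|^2_{L^2(Q)} + \|P_s^- w\|^2_{L^2(Q)} + 2\big(P_s^+ w,\, P_s^- w\big)_{L^2(Q)} = \|g_s - s\varphi_t w\|^2_{L^2(Q)},
\]
so the whole estimate is driven by extracting positive distributed terms from the cross product $(P_s^+ w, P_s^- w)_{L^2(Q)}$.

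The second step is to expand this cross term by integrating by parts in $t$ and in $x$. With $\psi(x)=\mathfrak{c}(x^2-d)$ one has $\varphi_x = 2\mathfrak{c}\theta x$ and $\varphi_{xx}=2\mathfrak{c}\theta$. The dominant contribution comes from pairing $-s^2\varphi_x^2 w$ with $2s\varphi_x w_x$: integrating $-2s^3\int_Q \varphi_x^3 w w_x$ by parts yields $s^3\int_Q (\varphi_x^3)_x w^2 = 24\mathfrak{c}^3 s^3\int_Q \theta^3 x^2 w^2$, which is the first term on the left of \eqref{Carl001}. Pairing $-w_{xx}$ with $2s\varphi_x w_x$ gives $s\int_Q \varphi_{xx}w_x^2 = 2\mathfrak{c}s\int_Q\theta w_x^2$. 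The boundary contributions in $x$ are gathered at the two endpoints: since $\psi_x(0)=0$, every boundary term carrying the factor $\varphi_x$ vanishes automatically at the singular endpoint (this is precisely why a weight with vanishing spatial derivative at $x=0$ is chosen), while at $x=1$ the integration by parts leaves $-s\int_0^T \varphi_x(1)w_x^2(t,1)\,dt = -2\mathfrak{c}\,s\int_0^T \theta\, e^{2s\varphi(t,1)}z_x^2(t,1)\,dt$, i.e. the observation term on the right of \eqref{Carl001}.

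The inverse-square potential plays a twofold role. First, integrating the interaction $\int_Q(-\tfrac{\mu}{x^2}w)(2s\varphi_x w_x)$ by parts and using $\partial_x(\varphi_x/x^2) = -2\mathfrak{c}\theta/x^2$ produces $-2\mathfrak{c}s\mu\int_Q \theta\tfrac{w^2}{x^2}$, which combines with the gradient term $2\mathfrak{c}s\int_Q\theta w_x^2$ to yield exactly the natural Carleman energy $2\mathfrak{c}s\int_Q\theta\big(w_x^2-\mu\tfrac{w^2}{x^2}\big)$; this is nonnegative precisely because $\mu\le\frac14$, by the Hardy inequality \eqref{hardyinequality}, and it is the second term on the left of \eqref{Carl001}. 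Second, to produce the extra singular term $\int_Q s\theta\tfrac{w^2}{x^\gamma}$ I would invoke the improved Hardy--Poincar\'e inequality \eqref{ihpi}: combining it (with $\eta=2-\gamma>0$) with a weighted Hardy inequality of the form $\int_0^1\tfrac{v^2}{x^\gamma}\le C\int_0^1 x^{2-\gamma}v_x^2$ gives, for $\mu\le\frac14$, the pointwise-in-time bound $\int_0^1\tfrac{w^2}{x^\gamma}\le C\int_0^1\big(w_x^2-\mu\tfrac{w^2}{x^2}\big)$, so the third term is dominated by the second and may be placed on the left after absorbing a small fraction of the energy.

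Finally, after choosing $\mathfrak{c}$, $d$ and $s_0$ so that all lower-order remainders, in particular those from $s\varphi_t w$, from $\theta'$, and from the non-dominant pieces of the cross product, are absorbed by the three positive terms for $s\ge s_0$, I would return from $w$ to $z$ via $z=e^{-s\varphi}w$; since $z_x = e^{-s\varphi}(w_x - s\varphi_x w)$, each weighted quantity in $z$ differs from its $w$-counterpart only by terms already controlled, and \eqref{Carl001} follows. The step I expect to be genuinely technical rather than routine is verifying that the boundary terms at $x=0$ generated by the potential, namely those of the type $\big[\tfrac{\varphi_x}{x^2}w^2\big]_{x=0}=\big[\tfrac{2\mathfrak{c}\theta}{x}w^2\big]_{x=0}$, actually vanish; this requires the precise decay of $H_0^{1,\mu}(0,1)$-solutions at the origin in the critical case $\mu=\frac14$, and is the crux of the argument.
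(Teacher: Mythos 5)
The paper does not actually prove Lemma \ref{Carleman001}: it is quoted verbatim from \cite[Theorem 5.1]{Van2011}, so your attempt has to be measured against the proof in that reference, which uses exactly the conjugation-and-splitting machinery you describe. Much of your outline is structurally sound: the $(\varphi_x^3)_x$ computation giving $s^3\theta^3x^2w^2$, the pairing that assembles the Hardy energy $s\theta\big(w_x^2-\mu\frac{w^2}{x^2}\big)$, the boundary observation at $x=1$, and the a posteriori derivation of the $x^{-\gamma}$ term from \eqref{ihpi} combined with a weighted Hardy inequality are all the right moves.

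There is, however, a genuine gap, and it is not the one you flag at the end: it is your decision to move the zeroth-order term $s\varphi_t w$ to the right-hand side. Squaring it produces $s^2\int_Q \varphi_t^2 w^2$, and since $|\theta'|\le C\theta^{1+1/k}$ and $\psi$ is bounded, this is of size $s^2\theta^{2+2/k}w^2$ with no decay in $x$. The only bulk terms available to absorb it are $s^3\theta^3x^2w^2$ and $s\theta x^{-\gamma}w^2$ (the Hardy energy yields the latter but nothing stronger in $w^2$). By Young's inequality, the strongest $x$-independent weight these two can generate is $(s^3\theta^3x^2)^a(s\theta x^{-\gamma})^{1-a}$ with $2a=\gamma(1-a)$, i.e.\ $a=\gamma/(\gamma+2)$, which equals $(s\theta)^{(3\gamma+2)/(\gamma+2)}$. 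But with $k=1+2/\gamma$ one has $2+2/k=(4\gamma+4)/(\gamma+2)=(3\gamma+2)/(\gamma+2)+1$: the power of $\theta$ you must absorb exceeds the best available one by exactly $1$, so near $t=0$ and $t=T$, where $\theta\to+\infty$, the absorption fails for \emph{every} $s$, no matter how large. This is precisely why the standard proofs (\cite{FI1996}, \cite{Van2011}) keep $s\varphi_t w$ inside the symmetric part $P_s^+$: then the cross product produces only $\tfrac{s}{2}\varphi_{tt}w^2$ and $-s^2\varphi_{tx}\varphi_x w^2$, of sizes $s\theta^{1+2/k}w^2$ and $s^2\theta^{2+1/k}x^2w^2$, and these \emph{are} absorbable --- indeed the choice $k=1+2/\gamma$ is made exactly so that $1+2/k=(3\gamma+2)/(\gamma+2)$ matches the Young exponent above. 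Separately, the difficulty you do flag (boundary terms at $x=0$) is real but understated: in the critical case $\mu=\tfrac14$ the integrals $\int w_x^2$ and $\int w^2/x^2$ can be individually infinite for functions in $H_0^{1,\mu}(0,1)$, so every integration by parts that separates them must be carried out on $(\epsilon,1)$ and passed to the limit; handling this occupies a substantial part of the proof in \cite{Van2011}.
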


Observe that, if the term
$$
\int\!\!\!\!\!\int_{Q} s \theta \big(z_x^2- \mu \frac{z^{2}}{x^2} \big)e^{2s\varphi}\,dx\,dt
$$
is not positive, then the estimate \eqref{Carl001} is not of great importance.
In fact, the Hardy inequality \eqref{hardyinequality} only ensures the positivity of of the quantity
$$
\int\!\!\!\!\!\int_{Q} s \theta \big(z_x^2- \mu \frac{z^{2}}{x^2} \big)\,dx\,dt.
$$
However, from \cite[Remark 3]{Van2011} and similarly
as in \cite{hajjaj}, we will rewrite the result given in Lemma \ref{Carleman001} in a more practical way.

\begin{lemma}\label{Carleman1}
Assume that $\mu\leq \ds\frac{1}{4}$. Then, there exist
$C > 0$ and $s_0> 0$ such that, for all $s\geq s_0$, every solution $z$ of \eqref{adjproblem}
satisfies
\begin{equation}\label{Carl10}
\mathfrak{J}_{\varphi,\eta,\gamma}(z) \leq C\Big(\int\!\!\!\!\!\int_{Q} g^2 e^{2s\varphi}\,dx\,dt + \int_{0}^{T}s\theta z_x^2(t,1)e^{2s\varphi(t,1)}\,dx\,dt\Big),
\end{equation}
where
\begin{align}\label{carloper1}
\mathfrak{J}_{\varphi,\eta,\gamma}(z)&=\int\!\!\!\!\!\int_{Q} s^{3} \theta^3 x^2 z^{2} e^{2s\varphi}\,dx\,dt
+\int\!\!\!\!\!\int_{Q} s \theta z_x^2 e^{2s\varphi}\,dx\,dt \nonumber\\
&\quad+ \int\!\!\!\!\!\int_{Q} s \theta  \frac{z^{2}}{x^2} e^{2s\varphi}\,dx\,dt
+\int\!\!\!\!\!\int_{Q} s \theta  \frac{z^{2}}{x^\gamma} e^{2s\varphi}\,dx\,dt,
\end{align}
if $\mu <\ds\frac{1}{4}$, and
\begin{align}\label{carloper2}
\mathfrak{J}_{\varphi,\eta,\gamma}(z)&=\int\!\!\!\!\!\int_{Q} s^{3} \theta^3 x^2 z^{2} e^{2s\varphi}\,dx\,dt
+\int\!\!\!\!\!\int_{Q} s \theta x^{\eta} z_x^2 e^{2s\varphi}\,dx\,dt \notag\\
&\quad+\int\!\!\!\!\!\int_{Q} s \theta  \frac{z^{2}}{x^\gamma} e^{2s\varphi}\,dx\,dt,
\end{align}
if $\mu =\ds\frac{1}{4}$. Here $\gamma$ is as in \eqref{weightfunc}
\end{lemma}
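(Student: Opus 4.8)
The plan is to \emph{derive} the two forms \eqref{carloper1} and \eqref{carloper2} directly from the estimate \eqref{Carl001} of Lemma \ref{Carleman001}, rather than re-running the whole Carleman computation. Indeed, the first term $\iint_Q s^3\theta^3 x^2 z^2 e^{2s\varphi}$ and the last term $\iint_Q s\theta \frac{z^2}{x^\gamma}e^{2s\varphi}$ already occur verbatim in \eqref{Carl001} and need no further treatment, so the only real work is to bound the remaining pieces of $\mathfrak{J}_{\varphi,\eta,\gamma}(z)$ by the single ``combined'' term $\iint_Q s\theta\big(z_x^2 - \mu \frac{z^2}{x^2}\big)e^{2s\varphi}$ of \eqref{Carl001} (together, when needed, with the harmless $s^3\theta^3 x^2 z^2$ term). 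The key device will be a \emph{weighted} Hardy inequality and a weighted improved Hardy--Poincar\'e inequality, obtained by inserting the Carleman weight into \eqref{hardyinequality} and \eqref{ihpi} through the substitution $w:=e^{s\varphi}z$.

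First I would record this weighted inequality, which I regard as the engine of the argument. For each fixed $t$, since $z(t,0)=z(t,1)=0$ and $e^{s\varphi(t,\cdot)}$ is bounded and bounded away from $0$, the function $w=e^{s\varphi}z$ lies in $H^1_0(0,1)$, with $w_x=(z_x+s\varphi_x z)e^{s\varphi}$. Expanding $\int_0^1 w_x^2\,dx$ and integrating the cross term by parts (the boundary terms vanish because $z=0$ at $x=0,1$) yields
\begin{equation*}
\int_0^1 w_x^2\,dx=\int_0^1\big(z_x^2-s\varphi_{xx}z^2-s^2\varphi_x^2 z^2\big)e^{2s\varphi}\,dx.
\end{equation*}
Here the choice of $\psi$ in \eqref{weightfunc} pays off: since $\varphi_{xx}=2\mathfrak{c}\,\theta>0$ and $\varphi_x^2\ge 0$, both correction terms are nonpositive, so that
\begin{equation*}
\int_0^1 w_x^2\,dx\le\int_0^1 z_x^2\,e^{2s\varphi}\,dx,
\qquad
\int_0^1\big(w_x^2-\tfrac14\tfrac{w^2}{x^2}\big)dx\le\int_0^1\big(z_x^2-\tfrac14\tfrac{z^2}{x^2}\big)e^{2s\varphi}\,dx.
\end{equation*}

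For the subcritical case $\mu<\frac14$, giving \eqref{carloper1}, I would use the algebraic splitting $z_x^2-\mu\frac{z^2}{x^2}=(1-4\mu)z_x^2+4\mu\big(z_x^2-\frac14\frac{z^2}{x^2}\big)$. Applying \eqref{hardyinequality} to $w$ and the second weighted inequality above shows that $\int_0^1\big(z_x^2-\frac14\frac{z^2}{x^2}\big)e^{2s\varphi}dx\ge 0$, whence $\int_0^1 z_x^2 e^{2s\varphi}dx\le\frac{1}{1-4\mu}\int_0^1\big(z_x^2-\mu\frac{z^2}{x^2}\big)e^{2s\varphi}dx$ for $0\le\mu<\frac14$ (the case $\mu<0$ being immediate). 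Multiplying by $s\theta$ and integrating in $t$ recovers the gradient term $\iint_Q s\theta z_x^2 e^{2s\varphi}$ from the combined term of \eqref{Carl001}; the potential term then comes for free from the weighted Hardy inequality, $\iint_Q s\theta\frac{z^2}{x^2}e^{2s\varphi}\le 4\iint_Q s\theta z_x^2 e^{2s\varphi}$. Both are thus $\le C(\text{RHS of }\eqref{Carl001})$, which is \eqref{Carl10}.

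The critical case $\mu=\frac14$, giving \eqref{carloper2}, is where I expect the main obstacle to sit, since $1-4\mu=0$ and the gradient term can no longer be extracted by the elementary splitting. Here I would instead apply the improved Hardy--Poincar\'e inequality \eqref{ihpi} to $w$, which gives $\int_0^1 x^\eta w_x^2\,dx\le C\int_0^1\big(w_x^2-\frac14\frac{w^2}{x^2}\big)dx\le C\int_0^1\big(z_x^2-\frac14\frac{z^2}{x^2}\big)e^{2s\varphi}dx$ by the weighted inequality. On the left, the elementary bound $x^\eta w_x^2\ge\frac12 x^\eta z_x^2 e^{2s\varphi}-s^2 x^\eta\varphi_x^2 z^2 e^{2s\varphi}$, together with $x^\eta\varphi_x^2=4\mathfrak{c}^2\theta^2 x^{\eta+2}\le 4\mathfrak{c}^2\theta^2 x^2$ for $x\in(0,1)$ and $\eta>0$, converts the unwanted correction into a multiple of the harmless term $s^3\theta^3 x^2 z^2$. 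Rearranging, multiplying by $s\theta$ and integrating in $t$ gives $\iint_Q s\theta x^\eta z_x^2 e^{2s\varphi}\le C\big(\iint_Q s\theta(z_x^2-\frac14\frac{z^2}{x^2})e^{2s\varphi}+\iint_Q s^3\theta^3 x^2 z^2 e^{2s\varphi}\big)$, and since both integrals on the right are controlled by \eqref{Carl001}, the estimate \eqref{Carl10} with $\mathfrak{J}_{\varphi,\eta,\gamma}(z)$ as in \eqref{carloper2} follows. The only delicate point throughout is justifying the integration by parts and the endpoint behaviour at $x=0$ for the a priori merely $H^1_{loc}$ solution $z$, which I would handle by a standard density and approximation argument.
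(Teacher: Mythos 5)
Your proof is correct, but it follows a genuinely different route from the paper's. The paper does \emph{not} argue from the statement of Lemma \ref{Carleman001}: it goes back inside the proof of \cite[Theorem 5.1]{Van2011} and starts from the intermediate estimate \eqref{step1}, which is phrased in terms of $Z=ze^{s\varphi}$ (so the combined term there is $\iint_Q s\theta\big(Z_x^2-\mu Z^2/x^2\big)\,dx\,dt$, not $\iint_Q s\theta\big(z_x^2-\mu z^2/x^2\big)e^{2s\varphi}\,dx\,dt$); it then splits that $Z$-term with a parameter $\delta<\inf(1,1-4\mu)$, applies \eqref{ihpi} to $Z$, and converts back to $z$ via $z_x^2e^{2s\varphi}\le 2Z_x^2+cs^2\theta^2x^2Z^2$. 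You instead take the stated inequality \eqref{Carl001} as a black box and supply the missing bridge yourself: the integration-by-parts identity $\int_0^1 w_x^2\,dx=\int_0^1\big(z_x^2-s\varphi_{xx}z^2-s^2\varphi_x^2z^2\big)e^{2s\varphi}\,dx$ for $w=ze^{s\varphi}$, combined with the convexity $\varphi_{xx}=2\mathfrak{c}\theta>0$, which yields weighted versions of \eqref{hardyinequality} and \eqref{ihpi}; the rest of your argument (the splitting $(1-4\mu)z_x^2+4\mu\big(z_x^2-\tfrac14 z^2/x^2\big)$ for the subcritical case, and the absorption of $s^2x^\eta\varphi_x^2z^2$ into the harmless $s^3\theta^3x^2z^2$ term in the critical case) mirrors the paper's mechanics, with $w$ playing the role of $Z$. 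What your approach buys: it is self-contained given the \emph{statement} of Lemma \ref{Carleman001}, and it incidentally proves that the weighted combined term $\iint_Q s\theta\big(z_x^2-\mu z^2/x^2\big)e^{2s\varphi}\,dx\,dt$ is nonnegative --- exactly the point the paper flags as problematic after Lemma \ref{Carleman001}, where it observes that Hardy's inequality only gives positivity of the unweighted quantity; nonnegativity is also what legitimizes your extraction of each individual term from \eqref{Carl001}. What the paper's route buys: by staying with $Z$ throughout, it never performs your integration by parts, so it sidesteps the boundary-term and endpoint justifications at $x=0$ that you correctly flag as the delicate point; note that for $\mu=\frac14$ solutions only lie in $H_0^{1,1/4}(0,1)$, which is strictly larger than $H_0^1(0,1)$, so your density argument has real content there (though the paper is equally silent on this issue in its own formal manipulations).
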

\begin{proof}

\noindent\textbf{Case 1: If $\mu<\ds\frac{1}{4}$.}\\
Let $Z=z e^{s\varphi}$. In order to prove \cite[Theorem 5.1]{Van2011}, the author has derived the following estimate
\begin{align}\label{step1}
&\int\!\!\!\!\!\int_{Q} s^{3} \theta^3 x^2 Z^{2}\,dx\,dt
+\int\!\!\!\!\!\int_{Q} s \theta \big(Z_x^2- \mu \frac{Z^{2}}{x^2} \big)\,dx\,dt
+\int\!\!\!\!\!\int_{Q} s \theta  \frac{Z^{2}}{x^\gamma} \,dx\,dt \nonumber\\
&\leq C\Big(\int\!\!\!\!\!\int_{Q} g^2 e^{2s\varphi}\,dx\,dt + \int_{0}^{T}s\theta Z_x^2(t,1)\,dx\,dt\Big).
\end{align}
Let $\delta< \inf(1,(1-4\mu))$ be a fixed positive constant. We have
\begin{align}\label{step2}
\int\!\!\!\!\!\int_{Q} s \theta \big(Z_x^2- \mu \frac{Z^{2}}{x^2} \big)\,dx\,dt&= (1-\delta)\int\!\!\!\!\!\int_{Q} s \theta \big(Z_x^2- \frac{1}{4} \frac{Z^{2}}{x^2} \big)\,dx\,dt \nonumber\\
&\quad+\delta \int\!\!\!\!\!\int_{Q} s \theta Z_x^2\,dx\,dt + \left(\frac{1}{4}(1-\delta)-\mu\right)\int\!\!\!\!\!\int_{Q} s \theta \frac{Z^{2}}{x^2}\,dx\,dt.
\end{align}
By \eqref{step1} and \eqref{step2}, we obtain
\begin{align*}
&\int\!\!\!\!\!\int_{Q} s^{3} \theta^3 x^2 Z^{2} \,dx\,dt
+ (1-\delta)\int\!\!\!\!\!\int_{Q} s \theta \big(Z_x^2- \frac{1}{4} \frac{Z^{2}}{x^2} \big)\,dx\,dt
+\delta \int\!\!\!\!\!\int_{Q} s \theta Z_x^2\,dx\,dt\\
&+ \left(\frac{1}{4}(1-\delta)-\mu\right)\int\!\!\!\!\!\int_{Q} s \theta \frac{Z^{2}}{x^2}\,dx\,dt
+\int\!\!\!\!\!\int_{Q} s \theta  \frac{Z^{2}}{x^\gamma} \,dx\,dt \\
&\leq C\Big(\int\!\!\!\!\!\int_{Q} g^2 e^{2s\varphi}\,dx\,dt + \int_{0}^{T}s\theta Z_x^2(t,1)\,dx\,dt\Big).
\end{align*}
On the other hand, from \eqref{ihpi}, for all $\eta>0$ there exists a constant $c_0=c_0(\eta)>0$ such that
\begin{equation}\label{step3}
\int\!\!\!\!\!\int_{Q} s \theta \big(Z_x^2- \frac{1}{4} \frac{Z^{2}}{x^2} \big)\,dx\,dt \geq c_0 \int\!\!\!\!\!\int_{Q} s \theta x^{\eta} Z_x^2\,dx\,dt.
\end{equation}
Hence,
\begin{align}\label{step4}
&\int\!\!\!\!\!\int_{Q} s^{3} \theta^3 x^2 Z^{2} \,dx\,dt
+ (1-\delta)c_0 \int\!\!\!\!\!\int_{Q} s \theta x^{\eta} Z_x^2\,dx\,dt
+\delta \int\!\!\!\!\!\int_{Q} s \theta Z_x^2\,dx\,dt \notag\\
&+ \big(\frac{1}{4}(1-\delta)-\mu\big)\int\!\!\!\!\!\int_{Q} s \theta \frac{Z^{2}}{x^2}\,dx\,dt
+\int\!\!\!\!\!\int_{Q} s \theta  \frac{Z^{2}}{x^\gamma} \,dx\,dt \notag\\
&\leq C\Big(\int\!\!\!\!\!\int_{Q} g^2 e^{2s\varphi}\,dx\,dt + \int_{0}^{T}s\theta Z_x^2(t,1)\,dx\,dt\Big).
\end{align}
Using the definition of $Z$, we have
\begin{equation}\label{step50}
Z^2= z^2 e^{2s\varphi},
\end{equation}
\begin{align}\label{step51}
Z_x= z_xe^{s\varphi} +s\theta \psi_x Z \quad \text{and}\quad z_x^2 e^{2s\varphi} \leq 2 Z_x^2 + c s^2\theta^2 x^2 Z^2,
\end{align}
for  a positive constant $c$.
Then,
\begin{equation}\label{step6}
\int\!\!\!\!\!\int_{Q} s \theta z_x^2e^{2s\varphi} \,dx\,dt \leq 2 \int\!\!\!\!\!\int_{Q} s \theta Z_x^2 \,dx\,dt
+ c \int\!\!\!\!\!\int_{Q} s^3 \theta^3 x^2 Z^2 \,dx\,dt.
\end{equation}
Combining \eqref{step4}-\eqref{step6}, we obtain the desired estimate \eqref{Carl10}. Indeed, defining
\[
a_0=\min\left\{\frac{1}{1+c},\frac{\delta}{2}, \left(\frac{1}{4}(1-\delta)-\mu\right)\right\}>0,
\]
we have
\begin{equation}
\begin{aligned}
&a_0\left(\int\!\!\!\!\!\int_{Q} s^{3} \theta^3 x^2 z^{2} e^{2s\varphi}\,dx\,dt
+\int\!\!\!\!\!\int_{Q} s \theta z_x^2 e^{2s\varphi}\,dx\,dt \nonumber+ \int\!\!\!\!\!\int_{Q} s \theta  \frac{z^{2}}{x^2} e^{2s\varphi}\,dx\,dt
+\int\!\!\!\!\!\int_{Q} s \theta  \frac{z^{2}}{x^\gamma} e^{2s\varphi}\,dx\,dt\right)\\
&\le a_0\left( (1+c)\int\!\!\!\!\!\int_{Q} s^{3} \theta^3 x^2Z^2\,dx\,dt + 2\int\!\!\!\!\!\int_{Q} s\theta Z_x^2dxdt +\int\!\!\!\!\!\int_{Q} s \theta  \frac{Z^{2}}{x^2} \,dx\,dt
+\int\!\!\!\!\!\int_{Q} s \theta  \frac{Z^{2}}{x^\gamma}\,dx\,dt\right)\\
& \le \int\!\!\!\!\!\int_{Q} s^{3} \theta^3 x^2Z^2\,dx\,dt + \delta \int\!\!\!\!\!\int_{Q} s\theta Z_x^2dxdt +\left(\frac{1}{4}(1-\delta)-\mu\right)\int\!\!\!\!\!\int_{Q} s \theta  \frac{Z^{2}}{x^2} \,dx\,dt
+\int\!\!\!\!\!\int_{Q} s \theta  \frac{Z^{2}}{x^\gamma}\,dx\,dt\\
&\le \int\!\!\!\!\!\int_{Q} s^{3} \theta^3 x^2 Z^{2} \,dx\,dt
+ (1-\delta)c_0 \int\!\!\!\!\!\int_{Q} s \theta x^{\eta} Z_x^2\,dx\,dt
+\delta \int\!\!\!\!\!\int_{Q} s \theta Z_x^2\,dx\,dt \notag\\
&+ \left(\frac{1}{4}(1-\delta)-\mu\right)\int\!\!\!\!\!\int_{Q} s \theta \frac{Z^{2}}{x^2}\,dx\,dt
+\int\!\!\!\!\!\int_{Q} s \theta  \frac{Z^{2}}{x^\gamma} \,dx\,dt \notag\\
&\leq C\Big(\int\!\!\!\!\!\int_{Q} g^2 e^{2s\varphi}\,dx\,dt + \int_{0}^{T}s\theta Z_x^2(t,1)\,dx\,dt\Big).
\end{aligned}
\end{equation}
Thus, the conclusion follows.

\noindent\textbf{Case 2: If $\mu=\ds\frac{1}{4}$.}\\
As before, let $Z=z e^{s\varphi}$ and define
\[
a_0=\min\left\{\frac{1}{1+c},\frac{c_0}{2} \right\}>0,
\]
where $c_0$ and $c$ are the constants of \eqref{step3} and \eqref{step51}, respectively. Then,
by \eqref{step1}, \eqref{step3}, \eqref{step50} and \eqref{step51}, that still hold if $\mu=\ds\frac{1}{4}$, we have
\begin{equation}\label{step7}
\begin{aligned}
&a_0\left(\int\!\!\!\!\!\int_{Q} s^{3} \theta^3 x^2 z^{2} e^{2s\varphi}\,dx\,dt
+ \int\!\!\!\!\!\int_{Q} s \theta x^{\eta} z_x^2e^{2s\varphi}\,dx\,dt
+\int\!\!\!\!\!\int_{Q} s \theta  \frac{z^{2}}{x^\gamma} e^{2s\varphi}\,dx\,dt\right) \\
&\le a_0\left(\int\!\!\!\!\!\int_{Q} s^{3} \theta^3 x^2 Z^2\,dx\,dt+  2\int\!\!\!\!\!\int_{Q} s \theta x^{\eta} Z_x^2 \,dx\,dt
+ c \int\!\!\!\!\!\int_{Q} s^3 \theta^3 x^2 Z^2  \,dx\,dt+ \int\!\!\!\!\!\int_{Q} s \theta  \frac{Z^{2}}{x^\gamma} \,dx\,dt\right)\\
& \le a_0(1+c) \int\!\!\!\!\!\int_{Q} s^{3} \theta^3 x^2 Z^2\,dx\,dt+a_0  \frac{2}{c_0}\int\!\!\!\!\!\int_{Q} s \theta \big(Z_x^2- \frac{1}{4} \frac{Z^{2}}{x^2} \big)\,dx\,dt+a_0 \int\!\!\!\!\!\int_{Q} s \theta  \frac{Z^{2}}{x^\gamma} \,dx\,dt \\
&(\text{by \eqref{step1}})\\
&\leq C\Big(\int\!\!\!\!\!\int_{Q} g^2 e^{2s\varphi}\,dx\,dt + \int_{0}^{T}s\theta z_x^2(t,1)e^{2s\varphi(t,1)}\,dx\,dt\Big).
\end{aligned}
\end{equation}
Hence, also in this case the conclusion follows.

\end{proof}
We point out that the Carleman estimates stated above are not appropriate to achieve our goal. In fact, all these estimates does not have the observation term in the interior of the domain. However, we use them to obtain the main Carleman estimate stated in Proposition \ref{maincarleman}.
More precisely, from the boundary Carleman estimates \eqref{Carl10}, we will deduce a global Carleman estimate for the adjoint problem \eqref{adjproblem}
with a distributed observation on a subregion
\begin{equation}\label{omegap}
\omega':= (\alpha', \beta') \subset\subset \omega.
\end{equation}
To do so, we recall the following weight functions associated to nonsingular Carleman estimates which are suited to our purpose:
$$\Phi(t,x):=\theta(t)\Psi(x)$$
where $\theta$ is defined in \eqref{weightfunc} and $\Psi(x) = e^{\rho \sigma} - e^{2 \rho \|\sigma\|_{\infty}}$. Here $\rho>0$, $\sigma \in C^2([0,1])$ is such that $\sigma(x)>0$ in $(0,1)$, $\sigma(0)=\sigma(1)=0$ and $\sigma_x (x)\neq 0$ in $[0,1]\setminus\tilde{\omega}$, being $\tilde{\omega}$ an arbitrary open subset of $\omega$.

In the following, we choose the constant $\mathfrak{c}$ in \eqref{weightfunc} so that
$$
\mathfrak{c}\geq \frac{e^{2 \rho \|\sigma\|_{\infty}} -1}{d-1}.
$$
By this choice one can prove that the function $\varphi$ defined in \eqref{phi} satisfies the next estimate
\begin{equation}\label{star10}
\varphi(t,x)\leq \Phi(t,x)\qquad \text{for every}\quad (t,x)\in [0,T]\times[0,1].
\end{equation}

Thanks to this property, we can prove the main Carleman estimate of this paper whose proof is based also on the following Caccioppoli's inequality:
\begin{proposition}[Caccioppoli's inequality]\label{lem_Caccio}
Let $\omega'$ and $\omega''$ be two nonempty open subsets of $(0, 1) $ such that $ \overline{\omega''} \subset \omega'$ and $\phi(t,x) = \theta(t) \varrho(x)$, where $\varrho \in C^2(\overline{\omega'}, \, \mathbb{R})$.  Then, there exists a constant $C> 0$ such that any solution $z$ of \eqref{adjproblem} satisfies
\begin{equation}\label{Caccioppoli_ineq}
\int\!\!\!\!\!\int_{Q_{\omega''}}  z_x^{2} e^{2s\phi}\,dx\,dt  \leq C
\int\!\!\!\!\!\int_{Q_{\omega'}} (g^{2} +  s^2 \theta^2  z^{2})e^{2s\phi}\,dx\,dt,
\end{equation}
where $Q_{\omega}:=(0,T)\times\omega$.
\end{proposition}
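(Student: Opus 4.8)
The plan is to run the classical energy (Caccioppoli) argument, exploiting that $\omega'$ sits strictly inside $(0,1)$ so that the inverse-square potential is harmless on it. First I would fix a cut-off function $\xi\in C_c^\infty(\omega')$ with $0\le\xi\le1$ and $\xi\equiv1$ on $\omega''$; this is possible precisely because $\overline{\omega''}\subset\omega'$. The object to differentiate in time is $\int_0^1\xi^2e^{2s\phi}z^2\,dx$. Using $\partial_t e^{2s\phi}=2s\phi_t e^{2s\phi}$ together with the adjoint equation in the form $z_t=-z_{xx}-\frac{\mu}{x^2}z-g$ to eliminate $z_t$, and then integrating by parts once in $x$ (the boundary terms at $x=0,1$ dropping out since $\xi$ has compact support in $\omega'$), I obtain an identity in which the good term $\int_0^1\xi^2e^{2s\phi}z_x^2\,dx$ appears with a favorable sign.

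Integrating this identity over $t\in(0,T)$, the total time derivative produces endpoint contributions $\frac12\int_0^1\xi^2e^{2s\phi}z^2\,dx$ evaluated at $t=0$ and $t=T$. Since $\theta(t)\to+\infty$ as $t\to0^+$ and $t\to T^-$ while $\varrho<0$ on $\overline{\omega'}$ (as holds for the weights used throughout, where $\psi<0$ and $\Psi<0$), the weight $e^{2s\phi}$ decays to zero at the endpoints; integrating first on $(\varepsilon,T-\varepsilon)$ and letting $\varepsilon\to0$, and using $z\in C([0,T];L^2(0,1))$ from Theorem~\ref{prop}, shows these endpoint terms vanish. What remains is
$$\int\!\!\!\int_Q \xi^2 e^{2s\phi}z_x^2\,dx\,dt = -s\int\!\!\!\int_Q \xi^2\phi_t e^{2s\phi}z^2\,dx\,dt - \int\!\!\!\int_Q \partial_x(\xi^2 e^{2s\phi})\,z z_x\,dx\,dt + \int\!\!\!\int_Q \xi^2 e^{2s\phi}\frac{\mu}{x^2}z^2\,dx\,dt + \int\!\!\!\int_Q \xi^2 e^{2s\phi}z g\,dx\,dt.$$
Since $\xi\equiv1$ on $\omega''$, the left-hand side dominates $\int\!\!\int_{Q_{\omega''}}z_x^2e^{2s\phi}$, which is exactly the quantity to be bounded.

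It then remains to estimate the four terms on the right by the admissible quantities $g^2e^{2s\phi}$ and $s^2\theta^2z^2e^{2s\phi}$ on $Q_{\omega'}$. The decisive step, and the only one requiring care, is the cross term $\int\!\!\int_Q\partial_x(\xi^2e^{2s\phi})\,z z_x$: expanding $\partial_x(\xi^2e^{2s\phi})=(2\xi\xi_x+2s\xi^2\phi_x)e^{2s\phi}$ with $\phi_x=\theta\varrho_x$ and applying Young's inequality to $|z z_x|$, I absorb a small multiple $\varepsilon\int\!\!\int\xi^2e^{2s\phi}z_x^2$ back into the left-hand side, at the cost of a term controlled by $\int\!\!\int_{Q_{\omega'}}(1+s^2\theta^2\varrho_x^2)e^{2s\phi}z^2$; since $\varrho\in C^2$ and $\theta$ is bounded below by a positive constant, this is of the required form $s^2\theta^2z^2e^{2s\phi}$. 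The remaining three terms are routine: the potential term uses $\frac{\mu}{x^2}\le\mu/(\alpha')^2$ on $\supp\xi\subset(\alpha',\beta')$; the $\phi_t$ term uses $|\phi_t|=|\theta'||\varrho|\le C\theta^{1+1/k}\le C\theta^2$, because $k=1+\frac{2}{\gamma}>1$ and $\theta$ is bounded below, whence $s|\phi_t|\le Cs^2\theta^2$; and the source term splits by Young's inequality into an $s^2\theta^2z^2e^{2s\phi}$ piece and the wanted $g^2e^{2s\phi}$ piece. Collecting everything and choosing $\varepsilon$ small yields \eqref{Caccioppoli_ineq}. The main obstacle is thus the correct bookkeeping of the $s$- and $\theta$-powers generated by differentiating the weight, which is precisely what forces the factor $s^2\theta^2$ on the right-hand side; the singular potential itself creates no difficulty, since $\omega'$ stays away from $x=0$. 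Finally, because the integration by parts and the pointwise use of the equation require enough regularity, I would first perform the computation for regular data $z_T$ using the regularity \eqref{regularity1} and then pass to general $z_T$ by density.
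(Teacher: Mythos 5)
Your proof is correct and is precisely the standard cutoff--energy argument that the paper itself omits, deferring to \cite[Lemma 6.1]{allal}: differentiating $\int_0^1 \xi^2 e^{2s\phi}z^2\,dx$ in time, substituting the adjoint equation, integrating by parts in $x$, and absorbing the cross term by Young's inequality is exactly that argument. Your treatment of the two delicate points --- the vanishing of the endpoint contributions (which indeed uses $\varrho<0$, as holds for the weights $\psi$ and $\Psi$ actually employed) and the bounds $\mu/x^2\le C$ and $s|\phi_t|\le C s^2\theta^2$ on $\omega'$, valid since $\omega'$ stays away from $x=0$ and $\theta$ is bounded below --- matches the standard treatment, so nothing is missing.
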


The proof of the previous result is similar to the one given, for instance, in \cite[Lemma 6.1]{allal}, so we omit it.

Now, we are ready to prove the following result:
\begin{proposition}\label{maincarleman}
Assume that $\mu\leq\ds \frac{1}{4}$. Then, there exist two positive constants $C$ and $s_0$ such that, the solution $z$ of equation \eqref{adjproblem} satisfies, for all $s \geq s_0$
\begin{equation}\label{maincarl01}
\mathfrak{J}_{\varphi,\eta,\gamma}(z)
\leq C \Big(\int\!\!\!\!\!\int_{Q} g^2 e^{2s\Phi}\,dx\,dt + \int\!\!\!\!\!\int_{Q_{\omega'}} s^{3}\theta^3 z^2 e^{2s\Phi}\,dx\,dt\Big).
\end{equation}
Here $\mathfrak{J}_{\varphi,\eta,\gamma}(\cdot)$ is defined in \eqref{carloper1} or \eqref{carloper2}.
\end{proposition}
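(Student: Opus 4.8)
The plan is to combine the boundary Carleman estimate of Lemma \ref{Carleman1} with a standard classical (nonsingular) Carleman estimate in order to trade the boundary observation at $x=1$ for a distributed observation on $\omega'$. The starting point is Lemma \ref{Carleman1}, which already delivers the full left-hand side $\mathfrak{J}_{\varphi,\eta,\gamma}(z)$, bounded by $\int\!\!\int_Q g^2 e^{2s\varphi} + \int_0^T s\theta z_x^2(t,1)e^{2s\varphi(t,1)}\,dt$. Since $\varphi \le \Phi$ on $[0,T]\times[0,1]$ by \eqref{star10} and $s>0$, the source term is immediately controlled by $\int\!\!\int_Q g^2 e^{2s\Phi}$, which is of the desired form. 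Hence the whole problem reduces to estimating the boundary term $\int_0^T s\theta z_x^2(t,1)e^{2s\varphi(t,1)}\,dt$ by the distributed quantity $\int\!\!\int_{Q_{\omega'}} s^3\theta^3 z^2 e^{2s\Phi}$.

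To achieve this, I would localize away from the singularity by a smooth cutoff. Since $\omega' = (\alpha',\beta') \subset\subset \omega$, fix $\alpha' < a < b < \beta'$ and a function $\xi \in C^\infty([0,1])$ with $\xi \equiv 0$ on $[0,a]$ and $\xi \equiv 1$ on $[b,1]$, so that $\supp \xi_x \subset (a,b) \subset \omega'$. Set $w := \xi z$. Then $w$ vanishes near $x=0$ -- so the potential $\frac{\mu}{x^2}$ acts only where $x \ge a > 0$ and is therefore a bounded zeroth-order coefficient on $\supp w$ -- and $w$ satisfies the Dirichlet conditions $w(t,0) = w(t,1) = 0$ together with $w_x(t,1) = z_x(t,1)$ (because $\xi \equiv 1$ near $x=1$ and $z(t,1)=0$). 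A direct computation gives $-w_t - w_{xx} - \frac{\mu}{x^2}w = \xi g - 2\xi_x z_x - \xi_{xx}z =: \tilde h$, where the last two terms are supported in $(a,b)\subset\omega'$.

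Next I would apply a standard classical Carleman estimate, with the weight $\Phi = \theta\Psi$, to the regular parabolic equation satisfied by $w$. Because $\Psi(1)=0$ and $\sigma(1)=0$ with $\sigma>0$ inside $(0,1)$ force $\Psi_x(1) = \rho\sigma_x(1) < 0$, the boundary contribution at $x=1$ appears on the left-hand side with a favorable sign, so the estimate can be read as $\int_0^T s\theta w_x^2(t,1)e^{2s\Phi(t,1)}\,dt \le C\big(\int\!\!\int_Q \tilde h^2 e^{2s\Phi} + \frac{\mu^2}{a^4}\int\!\!\int_Q w^2 e^{2s\Phi} + \int\!\!\int_{Q_{\tilde\omega}} s^3\theta^3 w^2 e^{2s\Phi}\big)$. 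Here the bounded-potential term $\frac{\mu}{x^2}w$ has been moved to the right-hand side and absorbed by the left-hand side term $\int\!\!\int s^3\theta^3 w^2 e^{2s\Phi}$ for $s$ large, using that $\theta$ is bounded below on $(0,T)$. Choosing the free set $\tilde\omega$ so that $\tilde\omega \subset\subset \omega'$ (which is admissible, since $\sigma$ may be built with its critical points inside $\tilde\omega$), the observation term is already supported in $\omega'$.

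It then remains to post-process the right-hand side. The contribution $\int\!\!\int_Q \tilde h^2 e^{2s\Phi}$ splits into $\int\!\!\int_Q g^2 e^{2s\Phi}$ plus terms in $z_x^2$ and $z^2$ supported in $(a,b)\subset\omega'$; the troublesome $z_x^2$ term is removed by Caccioppoli's inequality (Proposition \ref{lem_Caccio}) applied with $\phi = \Phi$ and $\omega'' = (a,b)$, which bounds it by $\int\!\!\int_{Q_{\omega'}}(g^2 + s^2\theta^2 z^2)e^{2s\Phi}$. Finally, using $w_x(t,1) = z_x(t,1)$, the inequality $e^{2s\varphi(t,1)} \le e^{2s\Phi(t,1)}$, the identity $w = \xi z$, and $s^2\theta^2 \le C s^3\theta^3$ (for $s \ge s_0$, since $\theta$ is bounded below), all remaining interior terms collapse into $\int\!\!\int_{Q_{\omega'}} s^3\theta^3 z^2 e^{2s\Phi}$; substituting this bound for the boundary term back into Lemma \ref{Carleman1} yields \eqref{maincarl01}. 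The hard part will be the bookkeeping around the classical estimate: one must invoke a version that keeps the normal-derivative boundary term at $x=1$ on the good side (with the sign guaranteed by $\Psi_x(1)<0$), verify that the singular potential genuinely becomes a harmless bounded term on $\supp w$, and ensure the localization sets $\supp \xi_x$ and $\tilde\omega$ both lie inside $\omega'$; the remaining steps are routine absorption for large $s$.
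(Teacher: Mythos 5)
Your proposal is correct, but it follows a genuinely different route from the paper. The paper never estimates the boundary term at all: it splits $z=w+v$ with $w=\xi z$ supported near the singularity and vanishing near $x=1$, and $v=(1-\xi)z$ supported away from $x=0$. It applies the singular Carleman estimate of Lemma \ref{Carleman1} to $w$ (so the boundary observation $w_x(t,1)$ vanishes identically, killing that term for free), applies the classical estimate of \cite[Lemma 1.2]{FI1996} \emph{with interior observation only} to $v$ (where the potential is bounded), verifies separately that $\mathfrak{J}_{\varphi,\eta,\gamma}(v)$ is dominated by the classical left-hand side $\int\!\!\int_Q (s\theta v_x^2+s^3\theta^3 v^2)e^{2s\varphi}$ since $v$ lives away from $x=0$, and then adds the two estimates via $z^2\le 2(w^2+v^2)$, $z_x^2\le 2(w_x^2+v_x^2)$. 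You instead apply Lemma \ref{Carleman1} to $z$ itself and then trade the boundary term $\int_0^T s\theta z_x^2(t,1)e^{2s\varphi(t,1)}\,dt$ for the interior observation, by applying to $w=\xi z$ (your mirror-image cutoff, $\equiv z$ near $x=1$) a classical Carleman estimate that \emph{retains} the normal-derivative term at $x=1$ on the good side; your sign argument is sound, since $\sigma>0$ in $(0,1)$, $\sigma(1)=0$ and $\sigma_x\neq 0$ outside $\tilde\omega$ force $\Psi_x(1)=\rho\sigma_x(1)<0$, and such boundary-retaining versions do fall out of the Fursikov--Imanuvilov proof. The trade-off: your route is more direct (no decomposition of $\mathfrak{J}_{\varphi,\eta,\gamma}(z)$, no analogue of the paper's step \eqref{star11}), but it requires a stronger form of the classical estimate than the one the paper cites -- \cite[Lemma 1.2]{FI1996} as invoked in the paper has no boundary term on the left, so you would need either a precise reference for that variant or a re-derivation; the paper's splitting is engineered precisely so that the vanilla interior-observation estimate suffices. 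The remaining ingredients (Caccioppoli with $\phi=\Phi$ to remove the commutator $z_x$ terms, absorption of the now-bounded potential $\mu x^{-2}$ on $\supp w$ for large $s$ using $\theta\ge (4/T^2)^k>0$, the choice $\tilde\omega\subset\subset\omega'$, and the comparison $\varphi\le\Phi$ from \eqref{star10}) are used in essentially the same way in both arguments and are handled correctly in your sketch.
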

\begin{proof}
Let us set
$\omega'' = (\alpha'', \beta'') \subset\subset \omega'$
and consider a smooth cut-off function $\xi \in C^\infty([0, 1])$ such that $0\leq \xi(x) \leq 1$ for $x\in (0,1)$, $\xi(x)=1$ for $x \in [0, \alpha'']$ and $\xi(x)=0$ for $x\in[\beta'', 1]$. Define $w:= \xi z$ where $z$ is the solution of \eqref{adjproblem}. Then,  $w$ satisfies the following problem:
\begin{equation}\label{equationxi}
\left\{
\begin{array}{lll}
-w_t - w_{xx} - \ds\frac{\mu}{x^2}w = \xi g - \xi_{xx}z - 2\xi_{x} z_x, & & (t,x)\in Q,\\
w(t, 1)=w(t, 0) = 0, & & t \in (0, T),\\
w(T,x)= \xi(x) z_{T}(x), & &  x \in  (0,1).
\end{array}
\right.
\end{equation}
First of all, we prove the first intermediate Carleman estimate for $z$ in $(0,T) \times(0,\alpha')$ (recall that $z\equiv w$ in $[0, \alpha']$):
\begin{equation}\label{intercarl01}
\begin{aligned}
\mathfrak{J}_{\varphi,\eta,\gamma}(w)&
\leq C \Big(\int\!\!\!\!\!\int_{Q} \xi^2 g^2 e^{2s\varphi}\,dx\,dt + \int\!\!\!\!\!\int_{Q_{\omega'}}(g^2 + s^{2}\theta^2 z^2) e^{2s\varphi}\,dx\,dt\Big)\\
& \leq C \Big(\int\!\!\!\!\!\int_{Q} \xi^2 g^2 e^{2s\Phi}\,dx\,dt + \int\!\!\!\!\!\int_{Q_{\omega'}}(g^2 + s^{2}\theta^2 z^2) e^{2s\Phi}\,dx\,dt\Big).
\end{aligned}
\end{equation}
The second inequality in \eqref{intercarl01} follows by \eqref{star10}, thus it is sufficient to prove the first inequality of \eqref{intercarl01}.
Applying the Carleman estimate \eqref{Carl10} to \eqref{equationxi}, we obtain
\begin{equation}\label{star6}
\mathfrak{J}_{\varphi,\eta,\gamma}(w)
\leq C\int\!\!\!\!\!\int_{Q} \Big(\xi^2 g^2 + \big(\xi_{xx}z + 2\xi_{x} z_x\big)^2\Big)e^{2s\varphi}\,dx\,dt.
\end{equation}
From the definition of $\xi$ and the Caccioppoli inequality \eqref{Caccioppoli_ineq}, we obtain
\begin{align}\label{star7}
\int\!\!\!\!\!\int_{Q}\big(\xi_{xx}z + 2\xi_{x} z_x\big)^2 e^{2s\varphi}\,dx\,dt
&\leq C\int\!\!\!\!\!\int_{Q_{\omega''}} (z^2 + z_x^2 )e^{2s\varphi}\,dx\,dt \notag\\
&\leq C \int\!\!\!\!\!\int_{Q_{\omega'}} (g^2 + s^{2}\theta^2 z^2) e^{2s\varphi}\,dx\,dt.
\end{align}
Combining \eqref{star6} and \eqref{star7} we obtain \eqref{intercarl01}.

Now, using the non degenerate Carleman estimate of
\cite[Lemma 1.2]{FI1996}, we are going to show a second estimate of $z$ in $(0,T)\times(\beta',1)$.
For this purpose, let $v=\zeta z$ where $\zeta:=1-\xi$ (hence $z \equiv v$ in $[\beta',1]$).
Clearly, the function $v$ is a solution of the uniformly parabolic equation
\begin{equation}\label{equationzeta}
\left\{
\begin{array}{lll}
-v_t - v_{xx} - \ds\frac{\mu}{x^2}v= \zeta g - \zeta_{xx}z - 2\zeta_{x} z_x, & & (t,x)\in (0,T) \times (\alpha',1),\\
v(t, 1)= v(t, \alpha') = 0, & & t \in (0, T),\\
v(T,x)= \zeta(x) z_{T}(x), & &  x \in  (\alpha',1).
\end{array}
\right.
\end{equation}
Since $\zeta$ has its support in $ [\alpha'', \beta'']$,
by \cite[Lemma 1.2]{FI1996} we have
\begin{align*}
&\int\!\!\!\!\!\int_{Q} \Big( s\theta v_{x}^{2} + s^{3}\theta^3 v^{2}\Big) e^{2s\Phi}\,dx\,dt= \int_0^T\int_{\alpha'}^1 \Big( s\theta v_{x}^{2} + s^{3}\theta^3 v^{2}\Big) e^{2s\Phi}\,dx\,dt\\
&\leq C\Bigg(\int_0^T\int_{\alpha'}^1  \Big(\zeta^2 g^2 + \big(\zeta_{xx}z + 2\zeta_{x} z_x\big)^2\Big)e^{2s\Phi}\,dx\,dt + \int\!\!\!\!\!\int_{Q_{\omega''}} s^{3}\theta^3 v^2 e^{2s\Phi}\,dx\,dt\Bigg)\\
&\leq C\Bigg(\int\!\!\!\!\!\int_{Q} \zeta^2 g^2 e^{2s\Phi}\,dx\,dt + \int\!\!\!\!\!\int_{Q_{\omega''}} (z^2 + z^2_x)e^{2s\Phi}\,dx\,dt + \int\!\!\!\!\!\int_{Q_{\omega''}} s^{3}\theta^3 v^2 e^{2s\Phi}\,dx\,dt\Bigg).
\end{align*}

Therefore, by the previous estimate, by \eqref{star10} and using the Caccioppoli inequality \eqref{Caccioppoli_ineq}, we
deduce
\begin{equation}\label{star9}
\begin{aligned}
&\int\!\!\!\!\!\int_{Q} \Big( s\theta v_{x}^{2} + s^{3}\theta^3 v^{2}\Big) e^{2s\varphi}\,dx\,dt\le \int\!\!\!\!\!\int_{Q} \Big( s\theta v_{x}^{2} + s^{3}\theta^3 v^{2}\Big) e^{2s\Phi}\,dx\,dt\\
&\leq C\Bigg(\int\!\!\!\!\!\int_{Q} \zeta^2 g^2 e^{2s\Phi}\,dx\,dt + \int\!\!\!\!\!\int_{Q_{\omega'}} \big(g^2 +  s^{3}\theta^3 z^2\big)e^{2s\Phi}\,dx\,dt\Bigg).
\end{aligned}
\end{equation}
Thus,  since $v=\zeta z$ has its support in $[0,T]\times [\alpha'',1]$, that is far away from the singularity point $x=0$, one can prove that there exists a constant $C>0$ such that:
\begin{equation}\label{star11}
\begin{aligned}
\mathfrak{J}_{\varphi,\eta,\gamma}(v)&\leq C\int\!\!\!\!\!\int_{Q} \Big( s\theta v_{x}^{2} + s^{3}\theta^3 v^{2}\Big) e^{2s\varphi}\,dx\,dt\\
& (\text{by \eqref{star9}})\\
&\leq C\left(\int\!\!\!\!\!\int_{Q} \zeta^2 g^2 e^{2s\Phi}\,dx\,dt  + \int\!\!\!\!\!\int_{Q_{\omega'}} \big(g^2 +  s^{3}\theta^3 z^2\big)e^{2s\Phi}\,dx\,dt\right).
\end{aligned}
\end{equation}

Note that
\begin{equation*}
z^2=(w+v)^2\leq 2( w^2 + v^2)\qquad \text{and} \qquad z_x^2=(w_x+v_x)^2\leq 2(w_x^2 + v_x^2).
\end{equation*}
Therefore, adding \eqref{intercarl01} and \eqref{star11}, \eqref{maincarl01} follows immediately.
\end{proof}

For our purposes in the next section, we concentrate now on a Carleman
inequality for solutions of \eqref{adjproblem} obtained via weight functions not exploding at $t=0$.
To this end, we will apply a classical argument that can be found, for instance, in \cite{FI1996} and recently in \cite{Allal2020} for a degenerate parabolic equation with memory.
More precisely, let us consider the function:
\begin{equation}\label{nu}
\nu(t)=  \left\{
\begin{array}{ll}
\theta(\frac{T}{2}), & t \in \left[0,\ds\frac{T}{2}\right], \\
\theta(t), & t \in \left[\ds\frac{T}{2},T\right],
\end{array}
\right.
\end{equation}
and the following associated weight functions:
\begin{equation}\label{nwieght}
\begin{aligned}
&\tilde{\varphi}(t,x):=\nu(t) \psi(x),\qquad \tilde{\Phi}(t,x):=\nu(t) \Psi(x), \\
&\hat{\Phi}(t):=\displaystyle\max_{x\in[0,1]}\tilde{\Phi}(t,x), \quad \hat{\varphi}(t):=\displaystyle\max_{x\in[0,1]}\tilde{\varphi}(t,x)\quad \text{and} \quad \check{\varphi}(t):=\displaystyle\min_{x\in[0,1]}\tilde{\varphi}(t,x).
\end{aligned}
\end{equation}

Now we are ready to state and prove this new modified Carleman estimate for the adjoint problem \eqref{adjproblem}.
\begin{lemma}\label{modifiedcarl}
Assume that $\mu\leq \ds\frac{1}{4}$. Then, there exist two positive constants $C$ and $s_0$ such that every solution $z$ of \eqref{adjproblem} satisfies, for all $s \geq s_0$
\begin{align}\label{modcarl}
&\|e^{s\hat{\varphi}(0)}z(0)\|_{L^2(0,1)}^2 +  \int\!\!\!\!\!\int_{Q} \nu z^{2} e^{2s\tilde{\varphi}}\,dx\,dt  \notag\\
&\leq C e^{2s[\hat{\varphi}(0)-\check{\varphi}(\frac{5T}{8})]} \Big(\int\!\!\!\!\!\int_{Q} g^2 e^{2s\tilde{\Phi}}\,dx\,dt +
\int\!\!\!\!\!\int_{Q_{\omega}}  s^{3}\nu^3 z^{2} e^{2s\tilde{\Phi}}\,dx\,dt\Big).
\end{align}
\end{lemma}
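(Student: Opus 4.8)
The plan is to localise in time, splitting $Q$ into the slab $(0,T/2)\times(0,1)$, where $\nu$ is frozen at the value $\theta(T/2)$ and all the weights are bounded, and the slab $(T/2,T)\times(0,1)$, where $\nu\equiv\theta$ and hence $\tilde\varphi\equiv\varphi$, $\tilde\Phi\equiv\Phi$. On the second slab the global estimate \eqref{maincarl01} of Proposition \ref{maincarleman} is available verbatim, while on the first slab the dissipativity of the equation will play the role of the Carleman weight. Throughout I would use a handful of elementary comparisons coming from $0<\gamma<2$, from $\psi<0$, $\Psi<0$, and from $\theta$ attaining its minimum at $t=T/2$: namely $\Phi\le\tilde\Phi$ on all of $Q$ (since $\theta(t)\ge\theta(T/2)$ for $t\le T/2$ and $\Psi<0$, the two coinciding for $t\ge T/2$), $e^{2s\tilde\varphi}\le e^{2s\hat\varphi(0)}$ on $[0,T/2]$, and $\tilde\varphi(t,x)\ge\check\varphi(\tfrac{5T}{8})$ on $[T/2,5T/8]$. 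From \eqref{star10} and the choice of $\mathfrak c$ one also gets $\check\varphi(\tfrac{5T}{8})\le\min_{[0,5T/8]\times[0,1]}\tilde\Phi$ and $\hat\varphi(0)-\check\varphi(\tfrac{5T}{8})>0$, so that the prefactor $e^{2s[\hat\varphi(0)-\check\varphi(5T/8)]}$ is $\ge 1$.

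For the slab $t\ge T/2$, I would note that since $x^{-\gamma}\ge 1$ on $(0,1)$ one has $\theta z^2e^{2s\varphi}\le s\theta x^{-\gamma}z^2e^{2s\varphi}$ for $s\ge 1$, whence $\iint_{(T/2,T)\times(0,1)}\nu z^2 e^{2s\tilde\varphi}\le \mathfrak J_{\varphi,\eta,\gamma}(z)$. I then apply \eqref{maincarl01} and convert the weights: for the source term $e^{2s\Phi}\le e^{2s\tilde\Phi}$; for the observation term, on the part $t\ge T/2$ one has $\theta\equiv\nu$, $\Phi\equiv\tilde\Phi$ and $\omega'\subset\omega$, while on the part $t\le T/2$ the pointwise inequality $s^3\theta^3e^{2s\Phi}\le s^3\nu^3e^{2s\tilde\Phi}$ holds on $(0,T/2)\times\omega'$ once $s$ is large, because $\tau\mapsto\tau^3e^{2s\tau\Psi(x)}$ is decreasing on $[\theta(T/2),\infty)$ when $|\Psi(x)|$ is bounded below (as it is on $\omega'$). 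This delivers $\iint_{(T/2,T)\times(0,1)}\nu z^2 e^{2s\tilde\varphi}\le C\big(\iint_Q g^2 e^{2s\tilde\Phi}+\iint_{Q_\omega}s^3\nu^3 z^2 e^{2s\tilde\Phi}\big)$. The one delicate point here is precisely the interior term on $[0,T/2]$: bounding $s^3\theta^3e^{2s\Phi}$ by a mere constant would be too lossy and would generate an energy contribution that cannot later be absorbed, so it must be dominated by $s^3\nu^3e^{2s\tilde\Phi}$.

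For the slab $t\le T/2$ I would use dissipativity. Reversing time turns \eqref{adjproblem} into a forward problem of the type \eqref{problem1}, so the energy computation \eqref{derivo}–\eqref{sottoderivo} gives, for $0\le t\le r\le T$, the bound $\|z(t)\|_{L^2(0,1)}^2\le e^{r-t}\big(\|z(r)\|_{L^2(0,1)}^2+\int_t^r\|g(\sigma)\|_{L^2(0,1)}^2\,d\sigma\big)$. Restricting $r\in[T/2,5T/8]$ (so the source integral stays over $[0,5T/8]$, away from the blow-up at $t=T$) and averaging in $r$ over that interval of length $T/8$ yields, uniformly for $t\in[0,T/2]$, the estimate $\|z(t)\|_{L^2(0,1)}^2\le C\big(\int_{T/2}^{5T/8}\|z(r)\|_{L^2(0,1)}^2\,dr+\iint_{(0,5T/8)\times(0,1)}g^2\big)$. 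Taking $t=0$ controls $\|z(0)\|_{L^2}^2$, and integrating in $t$ controls $\int_0^{T/2}\|z\|_{L^2}^2$; since $\nu=\theta(T/2)$ and $e^{2s\tilde\varphi}\le e^{2s\hat\varphi(0)}$ on $[0,T/2]$, both $\|e^{s\hat\varphi(0)}z(0)\|_{L^2}^2$ and $\iint_{(0,T/2)\times(0,1)}\nu z^2e^{2s\tilde\varphi}$ are then bounded by $Ce^{2s\hat\varphi(0)}\big(\int_{T/2}^{5T/8}\|z\|_{L^2}^2+\iint_{(0,5T/8)}g^2\big)$.

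It remains to feed the two surviving quantities back into weighted integrals, and this bookkeeping is the main obstacle. For the source, $\tilde\Phi\ge\check\varphi(\tfrac{5T}{8})$ on $[0,5T/8]$ gives $\iint_{(0,5T/8)}g^2\le e^{-2s\check\varphi(5T/8)}\iint_Q g^2e^{2s\tilde\Phi}$. For the state, $\tilde\varphi\ge\check\varphi(\tfrac{5T}{8})$ and $\nu\ge\theta(T/2)$ on $[T/2,5T/8]$ give $\int_{T/2}^{5T/8}\|z\|_{L^2}^2\le \theta(T/2)^{-1}e^{-2s\check\varphi(5T/8)}\iint_{(T/2,T)\times(0,1)}\nu z^2e^{2s\tilde\varphi}$, and this last integral is exactly the slab already estimated in the second step. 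The crucial care is to route $\int_{T/2}^{5T/8}\|z\|^2$ through that already-controlled slab rather than back through the full Carleman right-hand side, so that no factor $e^{-2s\check\varphi(5T/8)}$ ever multiplies a quantity requiring absorption on the left. Multiplying the third-step bounds by $e^{2s\hat\varphi(0)}$ then produces the prefactor $e^{2s[\hat\varphi(0)-\check\varphi(5T/8)]}$ in front of $C\big(\iint_Q g^2e^{2s\tilde\Phi}+\iint_{Q_\omega}s^3\nu^3 z^2e^{2s\tilde\Phi}\big)$; adding this to the second-step slab (whose bound already carries a prefactor $\ge 1$) yields \eqref{modcarl}.
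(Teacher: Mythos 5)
Your proposal is correct and follows essentially the same route as the paper's proof: the same splitting of $Q$ at $t=T/2$, the same application of Proposition \ref{maincarleman} on the slab where $\tilde{\varphi}\equiv\varphi$, the same routing of the $[0,T/2]$ estimate through the interval $[T/2,\tfrac{5T}{8}]$, and the same monotonicity argument giving $s^{3}\theta^{3}e^{2s\Phi}\leq s^{3}\nu^{3}e^{2s\tilde{\Phi}}$ for $s$ large. The only inessential difference is that the paper implements the dissipativity step via a cutoff in time $\tau$ and the energy estimate \eqref{stima2w} applied to $\tilde{z}=e^{s\hat{\varphi}(0)}\sqrt{\nu}\,\tau z$, whereas you use the backward Gronwall bound averaged over the terminal time $r\in[T/2,\tfrac{5T}{8}]$.
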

\begin{proof}
By the definitions of $\nu$ and $\tilde{\varphi}$ and using Proposition \ref{maincarleman}, it results that there exists a positive constant
$C$ such that all the solutions to equation \eqref{adjproblem}
satisfy
\begin{align}\label{est0}
\int_{\frac{T}{2}}^{T}\!\!\!\!\int_{0}^{1} \nu z^2 e^{2s\tilde{\varphi}}\,dx\,dt &= \int_{\frac{T}{2}}^{T}\!\!\!\!\int_{0}^{1} \theta z^2 e^{2s\varphi}\,dx\,dt \notag
\leq C\int_{\frac{T}{2}}^{T}\!\!\!\!\int_{0}^{1}s\theta \frac{z^2}{x^{\gamma}} e^{2s\varphi}\,dx\,dt \notag\\
&\leq C \Big(\int\!\!\!\!\!\int_{Q} g^2 e^{2s\Phi}\,dx\,dt + \int\!\!\!\!\!\int_{Q_{\omega'}}  s^{3}\theta^3 z^{2} e^{2s\Phi}\,dx\,dt\Big).
\end{align}

Let us introduce a function $\tau\in\mathrm{C}^1([0,T])$ such that
$\tau=1$ in $\left[0,\ds\frac{T}{2}\right]$ and $\tau \equiv 0$ in $\left[\ds\frac{5T}{8},T\right]$. Denote $\tilde{\tau}= e^{s\hat{\varphi}(0)}\sqrt{\nu} \tau$, where
$e^{s\hat{\varphi}(0)}= \displaystyle\max_{0\leq t\leq T} e^{s\hat{\varphi}(t)}$.

Let $\tilde{z}=\tilde{\tau}z$, then $\tilde{z}$ satisfies
\begin{equation}\label{zproblem}
\left\{
  \begin{array}{ll}
-\tilde{z}_t - \tilde{z}_{xx} - \ds\frac{\mu}{x^2}\tilde{z}  =-\tilde{\tau}_t z + \tilde{\tau}g, & (t,x) \in Q,\\
\tilde{z}(t,0)=\tilde{z}(t,1)=0, & t \in (0,T), \\
\tilde{z}(T,x)=0, & x \in (0,1).
  \end{array}
\right.
\end{equation}
Thanks to the estimate of $\sup_{t \in [0,T]}
\|\tilde z(t)\|^2_{L^2(0,1)}$ (see the energy estimate \eqref{stima2w}), we have
\begin{equation*}
\|\tilde{z}(0)\|_{L^2(0,1)}^2 +  \|\tilde{z}\|_{L^2(Q)}^2
\leq C \int\!\!\!\!\!\int_{Q}(\tilde{\tau}_t z + \tilde{\tau}g)^2\,dx\,dt,
\end{equation*}
which implies
\begin{equation*}
\nu(0)\|e^{s\hat{\varphi}(0)}z(0)\|_{L^2(0,1)}^2 +  \|e^{s\hat{\varphi}(0)}\sqrt{\nu}\tau z\|_{L^2(Q)}^2
\leq C \int\!\!\!\!\!\int_{Q}(\tilde{\tau}_t z + \tilde{\tau}g)^2\,dx\,dt.
\end{equation*}
By using the boundedness of $\theta$ in $\left[\ds\frac{T}{2},\ds\frac{5T}{8}\right]$, the definitions of $\tau$ and  of $\nu$ in $\left[\ds0,\ds\frac{5T}{8}\right]$ and the fact that $\nu_t(t)=0$ in $\left[0,\ds\frac{T}{2}\right]$ and $\tau(t)=0$ in $\left[\ds\frac{5T}{8},T\right]$, it holds that

\begin{align*}
&\bar{c} \left(\|e^{s\hat{\varphi}(0)}z(0)\|_{L^2(0,1)}^2 +  \int_{0}^{\frac{5T}{8}}\!\!\!\!\int_{0}^{1} \nu \tau^2 z^{2} e^{2s\hat{\varphi}}\,dx\,dt\right)\\
&\leq \nu(0)\|e^{s\hat{\varphi}(0)}z(0)\|_{L^2(0,1)}^2 +  \int_{0}^{\frac{5T}{8}}\!\!\!\!\int_{0}^{1} \nu \tau^2 z^{2} e^{2s\hat{\varphi}}\,dx\,dt\\
&\leq C\Big(\int_{\frac{T}{2}}^{\frac{5T}{8}}\!\!\!\!\int_{0}^{1}(\theta^2(t) + \theta(t))z^2 e^{2s\hat{\varphi}(0)}\,dx\,dt +\int_{0}^{\frac{5T}{8}}\!\!\!\!\int_{0}^{1} \nu g^2 e^{2s\hat{\varphi}(0)}\,dx\,dt\Big)\\
&\leq C\Big(\int_{\frac{T}{2}}^{\frac{5T}{8}}\!\!\!\!\int_{0}^{1}z^2 e^{2s\hat{\varphi}(0)}\,dx\,dt +\int_{0}^{\frac{5T}{8}}\!\!\!\!\int_{0}^{1}g^2 e^{2s\hat{\varphi}(0)}\,dx\,dt\Big),
\end{align*}
where $\bar{c}:= \min \{\nu(0), 1\}$.
That is,
\begin{align*}
&\|e^{s\hat{\varphi}(0)}z(0)\|_{L^2(0,1)}^2 +  \int_{0}^{\frac{T}{2}}\!\!\!\!\int_{0}^{1} \nu  z^{2} e^{2s\tilde{\varphi}}\,dx\,dt \notag\\
&\leq C \Big(\int_{\frac{T}{2}}^{\frac{5T}{8}}\!\!\!\!\int_{0}^{1}z^2 e^{2s(\hat{\varphi}(0)-\tilde{\varphi})}e^{2s\tilde{\varphi}}\,dx\,dt +\int_{0}^{\frac{5T}{8}}\!\!\!\!\int_{0}^{1}g^2 e^{2s(\hat{\varphi}(0)-\tilde{\varphi})}e^{2s\tilde{\varphi}}\,dx\,dt \Big).
\end{align*}
Observe that
\begin{equation*}
\check{\varphi}\left(\frac{5T}{8}\right) \leq \tilde{\varphi} \;\quad \text{in}\quad \left(0,\frac{5T}{8}\right)\times(0,1)
\end{equation*}
so that,
\begin{align}\label{est1}
&\|e^{s\hat{\varphi}(0)}z(0)\|_{L^2(0,1)}^2 +  \int_{0}^{\frac{T}{2}}\!\!\!\!\int_{0}^{1} \nu  z^{2} e^{2s\tilde{\varphi}}\,dx\,dt \notag\\
&\leq C e^{2s(\hat{\varphi}(0)-\check{\varphi}(\frac{5T}{8}))} \Big(\int_{\frac{T}{2}}^{\frac{5T}{8}}\!\!\!\!\int_{0}^{1}z^2 e^{2s\tilde{\varphi}}\,dx\,dt +\int_{0}^{\frac{5T}{8}}\!\!\!\!\int_{0}^{1}g^2 e^{2s\tilde{\varphi}}\,dx\,dt \Big).
\end{align}
As in \eqref{est0}, one can prove that there exists a positive constant
$C$ such that
\begin{equation*}
\int_{\frac{T}{2}}^{\frac{5T}{8}}\!\!\!\!\int_{0}^{1}z^2 e^{2s\tilde{\varphi}}\,dx\,dt
\leq C \Big(\int\!\!\!\!\!\int_{Q} g^2 e^{2s\Phi}\,dx\,dt +
\int\!\!\!\!\!\int_{Q_{\omega}}  s^{3}\theta^3 z^{2} e^{2s\Phi}\,dx\,dt\Big).
\end{equation*}
Using this last inequality in \eqref{est1}, we have
\begin{align}\label{est2}
\|e^{s\hat{\varphi}(0)}z(0)\|_{L^2(0,1)}^2 &+  \int_{0}^{\frac{T}{2}}\!\!\!\!\int_{0}^{1} \nu  z^{2} e^{2s\tilde{\varphi}}\,dx\,dt
\leq C e^{2s(\hat{\varphi}(0)-\check{\varphi}(\frac{5T}{8}))} \Big(\int\!\!\!\!\!\int_{Q} g^2 e^{2s\Phi}\,dx\,dt \notag\\
&+\int\!\!\!\!\!\int_{Q_{\omega}}  s^{3}\theta^3 z^{2} e^{2s\Phi}\,dx\,dt + \int_{0}^{\frac{5T}{8}}\!\!\!\!\int_{0}^{1}g^2 e^{2s\tilde{\varphi}}\,dx\,dt \Big).
\end{align}
From \eqref{star10} and by the definition of the modified
weights, notice that, in particular $\tilde{\varphi} \leq \tilde{\Phi}$ and $\Phi \leq \tilde{\Phi}$ in $Q$.
This, together with \eqref{est0} and \eqref{est2}, implies that
\begin{align}\label{est3}
\|e^{s\hat{\varphi}(0)}z(0)\|_{L^2(0,1)}^2 &+  \int_{0}^{T}\!\!\!\!\int_{0}^{1} \nu  z^{2} e^{2s\tilde{\varphi}}\,dx\,dt
\leq C e^{2s(\hat{\varphi}(0)-\check{\varphi}(\frac{5T}{8}))} \Big(\int\!\!\!\!\!\int_{Q} g^2 e^{2s\tilde{\Phi}}\,dx\,dt \notag\\
&+\int\!\!\!\!\!\int_{Q_{\omega}}  s^{3}\theta^3 z^{2} e^{2s\Phi}\,dx\,dt \Big).
\end{align}
To conclude, it suffices to remark that for $c>0$, the function $x \mapsto s^3 e^{-cs}$ is nonincreasing for $s$ sufficiently large.
So, since $\nu(t)\leq \theta(t)$ by taking $s$ large enough, one has
$$
s^{3}\theta^3 e^{2s\Phi} \leq s^{3}\nu^3 e^{2s\tilde{\Phi}},
$$
which, together with \eqref{est3}, provides the desired inequality.
\end{proof}
\subsection{Null controllability result}
\label{NCSHE}
Following the classical method as in \cite{FI1996}, with the modified Carleman inequality proved in the previous subsection, we can get a null controllability result for
\eqref{problem01}. However, as explained in \cite{TG2016}, this null controllability result cannot help to solve the controllability for integro-differential equations. Indeed, we will need to prove the null controllability of the singular heat equation \eqref{problem01}, for more regular solutions. For this reason, to formulate our results we introduce the following function space where the controllability will be solved:
\begin{equation*}
X_{s}:=\big\{y\in \mathcal{Z}: e^{-s\tilde{\Phi}}y \in L^2(Q) \big\}
\end{equation*}
equipped with the norm
\begin{align*}
\| y \|_{X_{s}}:=  \|e^{-s\tilde{\Phi}}y \|_{L^2(Q)}.
\end{align*}
Observe that, since $\tilde{\Phi}<0$, we have that the function $e^{-s\tilde{\Phi}}$ tends to $+\infty$ for $t \rightarrow T^-$.
Therefore, $y\in X_{s}$ requires that the solution $y$ has more regularity than the one in Lemma \ref{prop}. Moreover,

\begin{equation}\label{NC}
\text{ if } y\in X_{s} \;\text{  then }\;
y(T,x)=0 \text{ in } (0,1).
\end{equation}
From now on, we denote by $s_0$ the parameter defined in Lemma \ref{modifiedcarl}. Our first result, stated as follows, ensures the null controllability for \eqref{problem01}.

\begin{theorem}\label{firstresult}
Assume that $\mu\leq \ds \frac{1}{4}$ and $y_0\in H_0^{1,\mu}(0,1)$. If
$e^{-s\tilde{\varphi}}f \in L^2(Q)$ with $s\geq s_0$, then there exists a control function $u \in L^2(Q)$, such that the associated solution $y$
of \eqref{problem01} belongs to $X_{s}$.

Moreover, there exists a positive constant $C$ such that $y$ satisfies the following
estimate:
\begin{equation}\label{mainestimate}
\begin{aligned}
&\int\!\!\!\!\!\int_{Q} y^2 e^{-2s\tilde{\Phi}}\,dx\,dt +
\int\!\!\!\!\!\int_{Q_{\omega}}  s^{-3} \nu^{-3} u^{2} e^{-2s\tilde{\Phi}}\,dx\,dt \\
&\leq C e^{2s[\hat{\varphi}(0)-\check{\varphi}(\frac{5T}{8})]} \Big(\int\!\!\!\!\!\int_{Q} f^{2} e^{-2s\tilde{\varphi}}\,dx\,dt+ \|y_0e^{-s\hat{\varphi}(0)}\|_{L^2(0,1)}^2\Big).
\end{aligned}
\end{equation}
\end{theorem}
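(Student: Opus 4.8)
The plan is to use the classical duality (penalization) method of Fursikov and Imanuvilov, the weighted inequality of Lemma~\ref{modifiedcarl} playing the role of the observability estimate. Write $\mathcal{L}z := -z_t - z_{xx} - \frac{\mu}{x^2}z$ for the adjoint operator and let $P_0$ be the space of smooth functions on $\overline{Q}$ that vanish on the lateral boundary (and are compactly supported in $x$ away from $x=0$, so as not to feel the singularity). On $P_0$ I would introduce the symmetric bilinear form
\[
\mathfrak{a}(z,\hat z) := \iint_Q e^{2s\tilde\Phi}\,\mathcal{L}z\,\mathcal{L}\hat z\,dx\,dt + \iint_{Q_\omega} s^3\nu^3 e^{2s\tilde\Phi}\,z\,\hat z\,dx\,dt.
\]
The decisive point is that Lemma~\ref{modifiedcarl}, applied with $g=\mathcal{L}z$, bounds the quantity $\|e^{s\hat\varphi(0)}z(0)\|_{L^2(0,1)}^2 + \iint_Q \nu z^2 e^{2s\tilde\varphi}\,dx\,dt$ from above by $C\,e^{2s[\hat\varphi(0)-\check\varphi(\frac{5T}{8})]}\,\mathfrak{a}(z,z)$; hence $\mathfrak{a}(\cdot,\cdot)$ is a genuine inner product on $P_0$, and I would denote by $P$ the completion of $P_0$ for the induced norm.

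Next I would consider the linear functional
\[
\ell(z) := \iint_Q f\,z\,dx\,dt + \int_0^1 y_0(x)\,z(0,x)\,dx .
\]
Its continuity on $P$ is exactly where the two hypotheses enter: separating weights and using Cauchy--Schwarz,
\begin{align*}
|\ell(z)| &\le \|\nu^{-1/2}e^{-s\tilde\varphi}f\|_{L^2(Q)}\,\|\nu^{1/2}e^{s\tilde\varphi}z\|_{L^2(Q)}\\
&\quad + \|e^{-s\hat\varphi(0)}y_0\|_{L^2(0,1)}\,\|e^{s\hat\varphi(0)}z(0)\|_{L^2(0,1)} .
\end{align*}
Since $\nu\ge\nu(0)>0$, the first factor is controlled by $\|e^{-s\tilde\varphi}f\|_{L^2(Q)}<\infty$, while the two factors carrying $z$ are dominated by $\sqrt{\mathfrak{a}(z,z)}$ by the Carleman estimate; thus $\ell$ is bounded on $P$. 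The Riesz representation theorem then yields a unique $\hat z\in P$ such that $\mathfrak{a}(\hat z,z)=\ell(z)$ for all $z\in P$.

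I would then define the state and control by
\[
y := e^{2s\tilde\Phi}\,\mathcal{L}\hat z \ \text{ on } Q, \qquad u := -\,s^3\nu^3 e^{2s\tilde\Phi}\,\hat z \ \text{ on } Q_\omega .
\]
Integrating by parts in the identity $\mathfrak{a}(\hat z,z)=\ell(z)$ and using the Dirichlet conditions converts it into the transposition formulation of \eqref{problem01} with terminal datum $0$ and initial datum $y_0$; by the density of $P_0$ in $P$ and the uniqueness part of Theorem~\ref{prop}, this $y$ is the solution of \eqref{problem01} associated with the control $u$. Taking $z=\hat z$ and using the two bounds above gives
\begin{align*}
&\iint_Q y^2 e^{-2s\tilde\Phi}\,dx\,dt + \iint_{Q_\omega} s^{-3}\nu^{-3}u^2 e^{-2s\tilde\Phi}\,dx\,dt = \mathfrak{a}(\hat z,\hat z)\\
&\qquad \le C\,e^{2s[\hat\varphi(0)-\check\varphi(\frac{5T}{8})]}\Big(\iint_Q f^2 e^{-2s\tilde\varphi}\,dx\,dt + \|y_0 e^{-s\hat\varphi(0)}\|_{L^2(0,1)}^2\Big),
\end{align*}
which is precisely \eqref{mainestimate}. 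Finiteness of the left-hand side forces $u\in L^2(Q)$ and $e^{-s\tilde\Phi}y\in L^2(Q)$; feeding the source $f+1_\omega u\in L^2(Q)$ and $y_0\in H_0^{1,\mu}(0,1)$ into Theorem~\ref{prop} gives $y\in\mathcal{Z}$, so that $y\in X_s$ and, by \eqref{NC}, $y(T,\cdot)=0$.

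The step I expect to be the main obstacle is the rigorous identification of the transposition solution $y=e^{2s\tilde\Phi}\mathcal{L}\hat z$ with the genuine solution of \eqref{problem01}: since $\hat z$ belongs only to the abstract completion $P$, the integration by parts must be justified by approximation in $P_0$, and one has to control the boundary contributions at $x=0$ generated by the inverse-square potential, where $z/x$ and $z_x$ need not be separately integrable. Here the weighted terms $\iint_Q s\theta\,\frac{z^2}{x^2}e^{2s\varphi}$ and $\iint_Q s\theta\,z_x^2 e^{2s\varphi}$ built into $\mathfrak{J}_{\varphi,\eta,\gamma}$ are exactly what render these limits legitimate, so the argument closes once density and the uniqueness in Theorem~\ref{prop} are invoked.
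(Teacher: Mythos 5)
Your proposal follows essentially the same route as the paper's own proof: the same bilinear form $a(z_1,z_2)=\iint_Q e^{2s\tilde\Phi}L_\mu^{\star}z_1\,L_\mu^{\star}z_2\,dx\,dt+\iint_{Q_\omega}s^3\nu^3e^{2s\tilde\Phi}z_1z_2\,dx\,dt$ on the completion of smooth functions vanishing on the lateral boundary, coercivity from Lemma \ref{modifiedcarl}, the same linear functional $l(z)$ bounded via the split weights $\nu^{-1/2}e^{-s\tilde\varphi}f$ and $e^{-s\hat\varphi(0)}y_0$, solvability by Riesz/Lax--Milgram, the identical formulas $y=e^{2s\tilde\Phi}L_\mu^{\star}\bar z$, $u=-s^3\nu^3e^{2s\tilde\Phi}\bar z$, identification with the solution by transposition, and the estimate \eqref{mainestimate} obtained by testing with $\bar z$ itself. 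The argument is correct and matches the paper's proof in all essential respects.
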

\begin{proof}
Following the ideas in \cite{cara,TG2016}, fixed $s \geq s_0$, let us consider the functional
\begin{equation}\label{extremal}
J(y,u)= \Big(\int\!\!\!\!\!\int_{Q} y^2 e^{-2s\tilde{\Phi}}\,dx\,dt +
\int\!\!\!\!\!\int_{Q_{\omega}}  s^{-3}\nu^{-3} u^{2} e^{-2s\tilde{\Phi}}\,dx\,dt \Big),
\end{equation}
where $(y,u)$ satisfies
\begin{equation}\label{problem001}
\left\{
  \begin{array}{ll}
y_t - y_{xx} - \ds\frac{\mu}{x^2}y  =f + 1_{\omega} u(t), & (t,x) \in Q,\\
y(t,0)=y(t,1)=0, & t \in (0,T), \\
y(0,x)=y_0(x),\quad y(T,x)=0 & x \in (0,1),
  \end{array}
\right.
\end{equation}
with $u\in L^2(Q)$.

By means of standard arguments, it is easy to prove (see \cite{Lions,Lionsb})  that $J$ attains its minimizer at a unique point  denoted as $(\bar{y},\bar{u})$.

We set
$$
L_{\mu}y:= y_t - y_{xx} - \frac{\mu}{x^2}y \qquad \text{in}\quad Q.
$$
We will first prove that there exists a dual variable $\bar{z}$ such that
\begin{equation}\label{step0}
\left\{
  \begin{array}{ll}
\bar{y}=e^{2s\tilde{\Phi}} L_{\mu}^{\star} \bar{z}, &\quad \text{in}\quad Q,\\
\bar{u}= - s^3 \nu^3 e^{2s\tilde{\Phi}} \bar{z},  &\quad \text{in}\quad (0,T)\times \omega,\\
\bar{z}=0, &\quad \text{on}\quad (0,T)\times\{0,1\},
  \end{array}
\right.
\end{equation}
where $L_{\mu}^{\star}$ is the (formally) adjoint operator of $L_{\mu}$.

Let us start by introducing the following linear space
$$
\mathcal{P}_0=\big\{z \in C^{\infty}(\overline{Q}): z=0 \quad \text{on}\quad (0,T)\times\{0,1\}\big\},
$$
and introduce the bilinear form $a$:
\begin{equation*}
a(z_1,z_2)= \int\!\!\!\!\!\int_{Q} e^{2s\tilde{\Phi}} L_{\mu}^{\star} z_1 L_{\mu}^{\star} z_2\,dx\,dt
+\int\!\!\!\!\!\int_{Q_{\omega}}  s^{3}\nu^3 e^{2s\tilde{\Phi}} z_1 z_2\,dx\,dt,\quad \forall \; z_1,z_2\in \mathcal{P}_0.
\end{equation*}
Then, if the functions $\bar{y}$ and $\bar{u}$ given by \eqref{step0} satisfy the parabolic problem \eqref{problem001}, we
must have
\begin{equation}\label{step00}
a(\bar{z},z)= \int\!\!\!\!\!\int_{Q} f z\,dx\,dt + \int_{0}^{1}y_0 z(0)\,dx,\quad \forall\; z\in \mathcal{P}_0.
\end{equation}
The key idea in this proof is to show that there exists exactly one $\bar{z}$
satisfying \eqref{step00} in an appropriate class. We will then define $\bar{y}$ and $\bar{u}$ using \eqref{step0}
and we will check that the couple $(\bar{y},\bar{u})$ fulfills the desired properties.

Observe that the modified Carleman inequality \eqref{modcarl} holds for all $z \in \mathcal{P}_{0}$. Consequently,
\begin{equation}\label{step000}
\|e^{s\hat{\varphi}(0)}z(0)\|_{L^2(0,1)}^2 +  \int\!\!\!\!\!\int_{Q} \nu z^{2} e^{2s\tilde{\varphi}}\,dx\,dt
\leq C e^{2s[\hat{\varphi}(0)-\check{\varphi}(\frac{5T}{8})]} a(z,z).
\end{equation}
In particular, $a(\cdot,\cdot)$ is a strictly positive and symmetric bilinear form, that is, $a(\cdot,\cdot)$ is a scalar product in $\mathcal{P}_{0}$.

Denote by $\mathcal{P}$ the Hilbert space which is the completion of $\mathcal{P}_{0}$ with respect to the norm associated
to $a(\cdot,\cdot)$ (which we denote by $\|\cdot\|_{\mathcal{P}}$). Let us now consider the linear form $l$, given by
\begin{equation*}
l(z)=\int\!\!\!\!\!\int_{Q} f z\,dx\,dt + \int_{0}^{1}y_0 z(0)\,dx, \quad
\forall \; z\in \mathcal{P}.
\end{equation*}
By the Cauchy-Schwarz inequality and in view of \eqref{step000}, we have that
\begin{align*}
|l(z)| &\leq \left\|f\frac{e^{-s\tilde{\varphi}}}{\sqrt{\nu}}\right\|_{L^2(Q)}  \|z \sqrt{\nu}e^{s\tilde{\varphi}}\|_{L^2(Q)}
 + \|y_0 e^{-s\hat{\varphi}(0)}\|_{L^2(0,1)} \|z(0)e^{s\hat{\varphi}(0)} \|_{L^2(0,1)} \\
&\leq C e^{s[\hat{\varphi}(0)-\check{\varphi}(\frac{5T}{8})]} \Big(\|f e^{-s\tilde{\varphi}}\|_{L^2(Q)} +  \|y_0 e^{-s\hat{\varphi}(0)}\|_{L^2(0,1)}\Big) \|z\|_{\mathcal{P}},
\end{align*}
and then $l$ is a linear continuous form on $\mathcal{P}$. Hence, in view of Lax-Milgram's Lemma, there
exists one and only one $\bar{z}\in \mathcal{P}$ satisfying
\begin{equation}\label{step001}
a(\bar{z},z)= l(z),\quad \forall \;z\in \mathcal{P}.
\end{equation}
Moreover, we have
\begin{equation}\label{step002}
\|\bar{z}\|_{\mathcal{P}}\leq C e^{s[\hat{\varphi}(0)-\check{\varphi}(\frac{5T}{8})]} \Big(\|f e^{-s\tilde{\varphi}}\|_{L^2(Q)} +  \|y_0 e^{-s\hat{\varphi}(0)}\|_{L^2(0,1)}\Big).
\end{equation}
Let us set
\begin{equation}\label{step003}
\bar{y}=e^{2s\tilde{\Phi}} L_{\mu}^{\star} \bar{z}\quad \text{and}\quad
\bar{u}= - 1_{\omega} s^3 \nu^3 e^{2s\tilde{\Phi}} \bar{z}.
\end{equation}
With these definitions and by \eqref{step002}, it is easy to check that $\bar{y}$ and $\bar{u}$ satisfy
\begin{equation}\label{step004}
\begin{aligned}
&\int\!\!\!\!\!\int_{Q} \bar{y}^2 e^{-2s\tilde{\Phi}}\,dx\,dt +
\int\!\!\!\!\!\int_{Q_{\omega}} s^{-3} \nu^{-3} \bar{u}^{2} e^{-2s\tilde{\Phi}}\,dx\,dt \\
&\leq C e^{2s[\hat{\varphi}(0)-\check{\varphi}(\frac{5T}{8})]} \Big(\|f e^{-s\tilde{\varphi}}\|_{L^2(Q)}^2 +  \|y_0 e^{-s\hat{\varphi}(0)}\|_{L^2(0,1)}^2\Big),
\end{aligned}
\end{equation}
which implies \eqref{mainestimate}.

It remains to check that $\bar{y}$ is the solution of \eqref{problem001} corresponding to $\bar{u}$.  First of all, it is immediate that $\bar{y}\in X_{s}$ and
$\bar{u}\in L^2(Q)$. Denote by $\tilde{y}$ the (weak) solution of \eqref{problem01}
associated to the control function $u=\bar{u}$, then $\tilde{y}$ is also the unique solution of \eqref{problem01} defined by transposition. In other words, $\tilde{y}$ is the unique function in $L^2(Q)$ satisfying
\begin{equation}\label{trans}
\int\!\!\!\!\!\int_{Q} \tilde{y} h\,dx\,dt= \int\!\!\!\!\!\int_{Q} 1_{\omega} \bar{u} z\,dx\,dt + \int\!\!\!\!\!\int_{Q} f z\,dx\,dt + \int_{0}^{1}y_0 z(0)\,dx,\quad \forall \;h \in L^2(Q),
\end{equation}
where $z$ is the solution to
\begin{equation*}
\left\{
  \begin{array}{ll}
-z_t - z_{xx} - \frac{\mu}{x^2}z  = h, & (t,x) \in Q,\\
z(t,0)=z(t,1)=0, & t \in (0,T), \\
z(T,x)=0 & x \in (0,1).
  \end{array}
\right.
\end{equation*}
According to \eqref{step001} and \eqref{step003}, we see that $\bar{y}$ also satisfies \eqref{trans}. Therefore, $\bar{y}=\tilde{y}$. Consequently, the control $\bar{u}\in L^2(\omega\times(0,T))$ drives the state $\bar{y}\in X_{s}$ exactly to zero at time $T$.

\end{proof}

\section{Singular heat equation with memory}
\label{SHEM}
Prior to null controllability is the well-posedness of problem \eqref{problem}.
From the results in \cite{lorenzi}, we recall that in the nonsingular case $(\mu=0)$, it is well known that the heat operator with memory
gives rise to well-posed Cauchy-Dirichlet problems. Likewise in \cite{lorenzi}, by an application of the Contraction Mapping Principle and invoking Theorem \ref{prop}, we have that \eqref{problem} is well-posed in the following sense:
\begin{proposition}\label{well-posed}
Assume that $\mu\leq \ds\frac{1}{4}$. If $y_0 \in L^2(0,1)$ and $u\in L^2(Q)$, then there exists a unique solution $y$ of
\eqref{problem} such that
\[y \in C\big([0,T]; L^2(0,1)\big) \cap L^2 \big(0,T;H_{0}^{1,\mu}(0,1)\big).\]
\end{proposition}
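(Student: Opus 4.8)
The plan is to establish well-posedness of the memory problem \eqref{problem} by recasting it as a fixed-point problem for an integral operator and applying the Contraction Mapping Principle, using the well-posedness of the memoryless problem from Theorem \ref{prop} as the underlying linear theory. Fix $y_0 \in L^2(0,1)$ and $u \in L^2(Q)$. For a given $w \in \mathcal{W} = C([0,T];L^2(0,1)) \cap L^2(0,T;H_0^{1,\mu}(0,1))$, define the source term
\begin{equation*}
f_w(t,x) := \int_0^t a(t,s,x)\,w(s,x)\,ds + 1_\omega u(t,x),
\end{equation*}
and let $\Lambda(w)$ be the unique solution $y \in \mathcal{W}$ to the nonhomogeneous singular heat equation \eqref{problem01} with source $f_w$ and data $y_0$, whose existence, uniqueness, and energy estimate \eqref{stima2w} are guaranteed by Theorem \ref{prop}. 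A fixed point of $\Lambda$ is precisely a solution of \eqref{problem}, so the task reduces to showing $\Lambda$ is a contraction (possibly after a standard restriction to a short time interval, then iterating).

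First I would verify that $\Lambda$ maps $\mathcal{W}$ into itself, which requires checking that $f_w \in L^2(Q)$; since $a \in L^\infty((0,T)\times Q)$ and $w \in C([0,T];L^2(0,1))$, the memory convolution is controlled by $\|a\|_\infty\,T\,\sup_{t}\|w(t)\|_{L^2(0,1)}$, and $1_\omega u \in L^2(Q)$ by hypothesis, so $f_w \in L^2(Q)$ as needed. Next I would estimate the difference of two images. For $w_1, w_2 \in \mathcal{W}$, linearity of \eqref{problem01} gives that $\Lambda(w_1) - \Lambda(w_2)$ solves the same equation with zero initial data and source $f_{w_1} - f_{w_2} = \int_0^t a(t,s,\cdot)\,(w_1-w_2)(s,\cdot)\,ds$. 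Applying \eqref{stima2w} with $y_0 = 0$ yields
\begin{equation*}
\sup_{t\in[0,T]}\|\Lambda(w_1)-\Lambda(w_2)\|^2_{L^2(0,1)} \leq C_T\,\|f_{w_1}-f_{w_2}\|^2_{L^2(Q)},
\end{equation*}
and the source difference is bounded, via Cauchy--Schwarz in the $s$-integral and Fubini, by a constant times $\|a\|_\infty^2\,T^2\sup_{t}\|(w_1-w_2)(t)\|^2_{L^2(0,1)}$. Thus $\Lambda$ is Lipschitz in the $C([0,T];L^2(0,1))$ norm with constant proportional to $T$, so for $T$ small enough $\Lambda$ is a strict contraction and Banach's fixed-point theorem yields a unique local solution.

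The main obstacle, and the only genuinely non-routine point, is passing from a short-time solution to a global one on all of $[0,T]$. The contraction constant degrades like $T$ (or $T^2$), so the naive contraction only works on a short interval. The clean remedy is to introduce an exponentially weighted norm on $C([0,T];L^2(0,1))$, for instance $\vertiii{w} := \sup_{t\in[0,T]} e^{-\lambda t}\|w(t)\|_{L^2(0,1)}$ with $\lambda$ large; the Volterra (lower-triangular) structure of the memory convolution then produces a contraction constant independent of $T$ but decaying in $\lambda$, so choosing $\lambda$ sufficiently large makes $\Lambda$ a contraction on the full interval in one step. Alternatively one iterates the short-time argument over consecutive subintervals, using the local solution's terminal value as the initial datum for the next step; uniqueness on each piece then glues into global uniqueness. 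Either way the fixed point $y$ inherits the regularity $y \in C([0,T];L^2(0,1)) \cap L^2(0,T;H_0^{1,\mu}(0,1))$ directly from $\Lambda(y) = y$ together with Theorem \ref{prop}, which completes the proof.
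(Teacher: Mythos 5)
Your proposal is correct and follows essentially the same route as the paper: the paper's proof of Proposition \ref{well-posed} is precisely a one-line appeal to the Contraction Mapping Principle combined with Theorem \ref{prop}, following the scheme of Grasselli--Lorenzi \cite{lorenzi}, which is exactly the fixed-point argument you carry out in detail (your exponential-weight norm being the standard way to make the Volterra contraction global in time). The only refinement worth noting is that if you instead glue short-time solutions, the restart on each subinterval must carry the past history $\int_0^{t_1} a(t,s,\cdot)y(s,\cdot)\,ds$ as a known source term, not merely the terminal value as initial datum.
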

Now, we pass to derive our main result, which concerns the null controllability of the singular heat equation with memory \eqref{problem}.
Hence, in what follows, we assume that the function $a$ satisfies
\begin{equation}\label{conditiona}
e^{\frac{4^ks\mathfrak{c}d}{T^k(T-t)^k}}a \in L^{\infty}((0,T)\times Q),
\end{equation}
where $\mathfrak{c},d,k$ are the constants defined in \eqref{weightfunc} and $s$ is the same of Theorem \ref{firstresult}.

\begin{remark}
It is worth mentioning that, from the results in Guerrero and Imanuvilov \cite{guerrero}, it seems that the null controllability property
of parabolic equations with memory may fail without any additional conditions on the kernel. On the other hand, observe that the condition \eqref{conditiona} just restricts the function $a$ very near $T$, which is due to the fact that the function $\nu$ blows up only at $t=T$.
\end{remark}

For our proof, we are going to employ a fixed point strategy.
For $R>0$, we define
$$
X_{s,R}=\big\{w \in X_{s}:\, \|e^{-s\tilde{\Phi}}w\|_{L^2(Q)}\leq R \big\},
$$
which is a bounded, closed, and convex subset of $L^2(Q)$.

For any $w\in X_{s,R}$, let us consider the control problem
\begin{equation}\label{problem20}
\left\{
  \begin{array}{ll}
y_t - y_{xx} - \ds\frac{\mu}{x^2}y  =\int_{0}^{t}a(t,s,x)w(s,x)\,ds + 1_{\omega} u, & (t,x) \in Q,\\
y(t,0)=y(t,1)=0, & t \in (0,T), \\
y(0,x)=y_0(x), & x \in (0,1).
  \end{array}
\right.
\end{equation}
By Theorem \ref{firstresult} we first derive a null controllability result for \eqref{problem20}; then, as a second step, we will obtain the same controllability result for \eqref{problem} applying Kakutani's fixed point Theorem.

Our main result is thus the following.
\begin{theorem}\label{mainresult}
Assume that $\mu\leq \ds\frac{1}{4}$. If the function $a$ satisfies \eqref{conditiona}, then for any $y_0\in H_{0}^{1,\mu}(0,1)$, there exists a control function $u\in L^2(Q)$ such that the associated solution $y\in \mathcal{Z}$ of \eqref{problem} satisfies
\begin{equation}\label{ncresult}
y(T,\cdot)=0 \qquad \text{in}\quad (0,1).
\end{equation}
\end{theorem}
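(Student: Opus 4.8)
The plan is to prove null controllability of the memory equation \eqref{problem} by a fixed-point argument built on the linear result of Theorem \ref{firstresult}. For each $w\in X_{s,R}$, the source term $f_w(t,x):=\int_0^t a(t,s,x)w(s,x)\,ds$ is a fixed function, so \eqref{problem20} is an instance of the nonhomogeneous problem \eqref{problem01}. To invoke Theorem \ref{firstresult} I must verify its hypothesis, namely that $e^{-s\tilde\varphi}f_w\in L^2(Q)$. This is exactly the point of condition \eqref{conditiona}: since $w\in X_{s,R}$ controls $\|e^{-s\tilde\Phi}w\|_{L^2(Q)}\le R$, and \eqref{conditiona} bounds $a$ against the blow-up of $\nu$ near $t=T$, a Cauchy–Schwarz estimate in the $s$-integral should yield $\|e^{-s\tilde\varphi}f_w\|_{L^2(Q)}\le C\,\|e^{-s\tilde\Phi}w\|_{L^2(Q)}\le CR$, with a constant independent of $w$. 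Thus for each $w$ there is a control $u_w$ and a solution $y_w\in X_s$ satisfying \eqref{mainestimate}, and by \eqref{NC} we automatically get $y_w(T,\cdot)=0$.

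Next I would define the multivalued map $\Lambda:X_{s,R}\to 2^{X_{s,R}}$ sending $w$ to the set of all such solutions $y_w$ of \eqref{problem20} that lie in $X_s$ and satisfy the estimate \eqref{mainestimate}. The reason for a set-valued rather than single-valued map is that the control $u_w$ is not unique, so $\Lambda(w)$ is a genuine set; this is precisely why Kakutani's theorem, rather than Schauder's, is the right tool. I would check the four hypotheses of Kakutani's fixed-point theorem in turn: (i) $\Lambda(w)$ is nonempty (from Theorem \ref{firstresult}); (ii) $\Lambda$ maps $X_{s,R}$ into itself for $R$ chosen large enough — here I use \eqref{mainestimate} together with the bound $\|e^{-s\tilde\varphi}f_w\|_{L^2(Q)}\le CR$ and $\|y_0 e^{-s\hat\varphi(0)}\|_{L^2(0,1)}$, absorbing the resulting constant into $R$ by taking $R$ sufficiently large relative to the data (the factor $e^{2s[\hat\varphi(0)-\check\varphi(5T/8)]}$ is a fixed constant once $s$ is fixed); (iii) $\Lambda(w)$ is closed and convex, which follows from the linearity of the state equation in $(y,u)$ and the convexity of the constraint set; (iv) $\Lambda$ has closed graph and relatively compact range, the compactness coming from the parabolic regularizing effect encoded in \eqref{stima3w}, which embeds the image into a space compactly included in $L^2(Q)$.

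A fixed point $y^\ast\in\Lambda(y^\ast)$ then solves \eqref{problem20} with $w=y^\ast$, i.e. the memory term $\int_0^t a(t,s,x)y^\ast(s,x)\,ds$ coincides with the source, so $y^\ast$ solves the original memory equation \eqref{problem}; moreover $y^\ast\in X_s$ gives \eqref{ncresult} via \eqref{NC}, and the regularity $y^\ast\in\mathcal Z$ follows from Theorem \ref{prop} applied to the linear problem with the now-fixed source. The main obstacle I anticipate is the compactness step (iv): establishing that the graph of $\Lambda$ is closed requires passing to the limit in the weak formulation along a sequence $w_n\to w$, controlling both the weighted integrals in \eqref{mainestimate} and the convergence of the memory source $f_{w_n}\to f_w$, and I would lean on the a priori bound $\|e^{-s\tilde\varphi}f_w\|_{L^2(Q)}\le C\|e^{-s\tilde\Phi}w\|_{L^2(Q)}$ together with the compact-embedding consequence of \eqref{stima3w} to extract a convergent subsequence whose limit remains in $\Lambda(w)$. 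Verifying \eqref{conditiona} is precisely what makes the weighted source estimate uniform, so the delicate interplay is between the exponential weight $e^{-s\tilde\varphi}$ and the kernel growth near $t=T$.
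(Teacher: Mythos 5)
Your overall strategy coincides with the paper's: freeze the memory term, apply Theorem \ref{firstresult} to \eqref{problem20}, and run Kakutani's fixed-point theorem on the same multivalued map $\Lambda$. However, your step (ii) --- the invariance $\Lambda(X_{s,R})\subset X_{s,R}$ --- contains a genuine gap, and it is precisely the crux of the paper's proof. From \eqref{mainestimate} and your source bound, what one actually obtains is an inequality of the form
\[
\int\!\!\!\!\!\int_{Q} y^2 e^{-2s\tilde{\Phi}}\,dx\,dt \;\leq\; K_1(s)\,R^2 + K_2(s)\,\|y_0\|^2_{L^2(0,1)},
\qquad
K_1(s)= C\,e^{2s[\hat{\varphi}(0)-\check{\varphi}(\frac{5T}{8})]}\,e^{2s\hat{\Phi}(0)},
\]
where the factor $e^{2s\hat{\Phi}(0)}$ comes from $\|w\|_{L^2(Q)}\leq e^{s\hat{\Phi}(0)}\|e^{-s\tilde{\Phi}}w\|_{L^2(Q)}$ (your ``constant $C$'' in the bound $\|e^{-s\tilde{\varphi}}f_w\|\leq C R$ silently absorbs it). Self-mapping requires $K_1(s)<1$: if $K_1\geq 1$, then $K_1R^2+K_2\geq R^2$ for \emph{every} $R$, so ``taking $R$ sufficiently large'' can never work --- enlarging $R$ only helps against the data term, never against the term proportional to $R^2$. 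Moreover $K_1(s)<1$ is not automatic: one checks that $\hat{\varphi}(0)-\check{\varphi}(\frac{5T}{8})>0$, so the first exponential blows up as $s\to\infty$, and whether the decaying factor $e^{2s\hat{\Phi}(0)}$ wins depends (via $\hat{\varphi}(0)\leq\hat{\Phi}(0)$) on the sign of $2\hat{\Phi}(0)-\check{\varphi}(\frac{5T}{8})$, i.e.\ on the weight parameters themselves. Your parenthetical remark that $e^{2s[\hat{\varphi}(0)-\check{\varphi}(\frac{5T}{8})]}$ ``is a fixed constant once $s$ is fixed'' therefore hides the whole difficulty.

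This is where the paper does real work that your proposal omits: it chooses $\mathfrak{c}$ in the interval $\Big(\frac{e^{2\rho\|\sigma\|_{\infty}}-1}{d-1},\,\frac{16}{15}\frac{e^{2\rho\|\sigma\|_{\infty}}-e^{\rho\|\sigma\|_{\infty}}}{d-1}\Big)$, with $d>3$ and $2<k<\frac{\ln(4/3)}{\ln(16/15)}-1$ in \eqref{weightfunc}, precisely so that $2\hat{\Phi}(0)-\check{\varphi}(\frac{5T}{8})<0$; it then \emph{increases $s$} (enlarging $s_0$ if necessary) so that $K_1(s)\leq\frac{1}{2}$, and only afterwards takes $R$ large to absorb the data term $K_2(s)\|y_0\|^2$. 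Note the order of quantifiers is the opposite of yours: the Carleman parameter $s$ must be re-enlarged at this stage; it cannot be frozen first with $R$ chosen last. Without this parameter tuning the fixed-point argument does not close. The remaining steps of your proposal (nonemptiness and convexity of $\Lambda(w)$, compactness via Theorem \ref{prop} and Aubin--Lions, the closed-graph/upper-semicontinuity limit argument, and the conclusion $y(T,\cdot)=0$ via \eqref{NC} together with the regularity $y\in\mathcal{Z}$) match the paper and are fine as sketched.
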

\begin{proof} Setting $C_0:=\frac{4^k\mathfrak{c}d}{T^k}$, by \eqref{conditiona} and the estimate $e^{-s\tilde{\varphi}}\leq e^{\frac{sC_0}{(T-t)^k}}$, we get that
\begin{align}\label{r1}
&\int\!\!\!\!\!\int_{Q}  \Big( e^{-s\tilde{\varphi}} \int_{0}^{t} a(t,s,x)  w(s,x)\,ds\Big)^2 \,dx\,dt
\leq C \int\!\!\!\!\!\int_{Q}  \int_{0}^{t} e^{\frac{2C_0 s}{(T-t)^k}} a^2(t,s,x) w^2(s,x)\,ds\,dx\,dt \notag\\
&\leq C \int\!\!\!\!\!\int_{Q} w^2\,dx\,dt \notag \leq C \Big(\displaystyle\sup_{(t,x) \in \overline{Q}} e^{2s\tilde{\Phi}} \Big) \int\!\!\!\!\!\int_{Q} e^{-2s\tilde{\Phi}} w^2\,dx\,dt \notag\leq C R^2<+\infty.
\end{align}
(recall that $w\in X_{s,R}$).
Thus, the result in Theorem \ref{firstresult} holds for
the equation \eqref{problem20}, i.e. for any $y_0\in H_{0}^{1,\mu}(0,1)$, there exists a control function $u\in L^2(Q)$ such that the associated solution $y$ of \eqref{problem20} is in $X_{s}$ and
\begin{equation*}
y(T,\cdot)=0 \qquad \text{in}\quad (0,1).
\end{equation*}
Let us now introduce, for every $w\in X_{s,R}$, the multivalued map
$$\Lambda: X_{s,R} \subset X_{s} \rightarrow 2^{X_{s}}$$
with
\begin{align*}
\Lambda(w)=\displaystyle\Big\{&y\in X_s: \text{ for some}\; u \in L^2(Q)\; \text{satisfying} \\
&\int\!\!\!\!\!\int_{Q_{\omega}}  s^{-3} \nu^{-3} u^{2} e^{-2s\tilde{\Phi}}\,dx\,dt
\leq C e^{2s[\hat{\varphi}(0)-\check{\varphi}(\frac{5T}{8})]} \Big(R^2+\int_{0}^{1}y_0^2 e^{-2s\hat{\varphi}(0)}\,dx\,dt\Big)\\
&y \text{ solves }\eqref{problem20}\Big\}.
\end{align*}
Observe that if $y\in \Lambda (w)$, then $y(T, \cdot)=0$ in $(0,1)$ via \eqref{NC}.

To achieve our goal, it will suffice to show that $\Lambda$ possesses at least one fixed point.
To this purpose, we shall apply Kakutani's fixed point Theorem (see \cite[Theorem 2.3]{cara}).

It is readily seen that $\Lambda(w)$ is a nonempty, closed and convex subset of $L^2(Q)$
for every $w\in X_{s,R}$. Then, we prove that $\Lambda(X_{s,R})\subset X_{s,R}$ with sufficiently large $R>0$.
By \eqref{mainestimate} and condition \eqref{conditiona}, and arguing as before we have
\begin{align*}
&\int\!\!\!\!\!\int_{Q} y^2 e^{-2s\tilde{\Phi}}\,dx\,dt +
\int\!\!\!\!\!\int_{Q_{\omega}}  s^{-3} \nu^{-3} u^{2} e^{-2s\tilde{\Phi}}\,dx\,dt \\
&\leq C e^{2s[\hat{\varphi}(0)-\check{\varphi}(\frac{5T}{8})]} \Big(\int\!\!\!\!\!\int_{Q}  e^{-2s\tilde{\varphi}}  \Big(\int_{0}^{t} a(t,s,x)  w(s,x)\,ds\Big)^2 \,dx\,dt + e^{-2s\hat{\varphi}(0)} \int_0^1 y_0^2\,dx\Big) \\
&\leq   C e^{2s[\hat{\varphi}(0)-\check{\varphi}(\frac{5T}{8})]} \Big( \int\!\!\!\!\!\int_{Q} w^2(t,x)\,dx\,dt + e^{-2s\hat{\varphi}(0)}  \int_0^1 y_0^2\,dx\Big)\\
&\leq  C e^{2s[\hat{\varphi}(0)-\check{\varphi}(\frac{5T}{8})]} \Big(\displaystyle\sup_{(t,x) \in \overline{Q}} e^{2s\tilde{\Phi}} \Big) \Big( \int\!\!\!\!\!\int_{Q} e^{-2s\tilde{\Phi}(t,x)} w^2(t,x)\,dx\,dt \Big) + C e^{-2s\check{\varphi}(\frac{5T}{8})} \int_0^1 y_0^2\,dx.
\end{align*}
By virtue of $\hat \varphi(0) \le \hat \Phi(0)$ and $ \tilde{\Phi} \leq \hat{\Phi}(0)$ in $Q$, we get
\begin{align}\label{starstima}
&\int\!\!\!\!\!\int_{Q} y^2 e^{-2s\tilde{\Phi}}\,dx\,dt +
\int\!\!\!\!\!\int_{Q_{\omega}}  s^{-3} \nu^{-3} u^{2} e^{-2s\tilde{\Phi}}\,dx\,dt \notag \\
&\leq  C e^{s[2\hat{\varphi}(0)-2\check{\varphi}(\frac{5T}{8})+2\hat{\Phi}(0)]} \int\!\!\!\!\!\int_{Q} e^{-2s\tilde{\Phi}(t,x)} w^2(t,x)\,dx\,dt
+ C e^{-2s\check{\varphi}(\frac{5T}{8})} \int_0^1 y_0^2\,dx \notag \\
&\leq  C e^{s[4\hat{\Phi}(0)-2\check{\varphi}(\frac{5T}{8})]} R^2 + C e^{-2s\check{\varphi}(\frac{5T}{8})} \int_0^1  y_0^2\,dx.
\end{align}
Now, choosing the constant $\mathfrak{c}$ (see \eqref{weightfunc}) in the interval
$$
\left(\frac{e^{2 \rho \|\sigma\|_{\infty}} -1}{d-1}, \frac{16}{15}\frac{e^{2 \rho \|\sigma\|_{\infty}} -e^{\rho \|\sigma\|_{\infty}}}{d-1}\right),
$$
which is not empty for $\rho$ sufficiently large, we have
\begin{align*}
2\hat{\Phi}(0)-\check{\varphi}\left(\frac{5T}{8}\right)
&=\left(\frac{4}{T^2}\right)^k\Big[2(e^{\rho\|\sigma\|_{\infty}}-e^{2\rho\|\sigma\|_{\infty}}) +\mathfrak{c} d\left (\frac{16}{15}\right)^k \Big]\\
&< \left(\frac{4}{T^2}\right)^k \left(-2 +  \frac{d}{d-1} \left(\frac{16}{15}\right)^{k+1} \right) (e^{2\rho\|\sigma\|_{\infty}}-e^{\rho\|\sigma\|_{\infty}}).
\end{align*}
Therefore, taking the parameters  $d$ and $k$ defined in \eqref{weightfunc} in such a way that  $d>3$ and $2<k<\frac{\ln(4/3)}{\ln(16/15)}-1$, we infer that
$$
2\hat{\Phi}(0)-\check{\varphi}(\frac{5T}{8})<0.
$$
Hence for $s$ sufficiently large, increasing the parameter $s_0$ if necessary, we obtain
\begin{equation*}
\int\!\!\!\!\!\int_{Q} y^2 e^{-2s\tilde{\Phi}}\,dx\,dt +
\int\!\!\!\!\!\int_{Q_{\omega}}  s^{-3} \nu^{-3} u^{2} e^{-2s\tilde{\Phi}}\,dx\,dt
\leq \frac{1}{2} R^2 + C e^{-2s\check{\varphi}(\frac{5T}{8})} \int_0^1  y_0^2\,dx.
\end{equation*}
Then, for  $s$ and $R$ large enough, we obtain
\begin{equation*}
\int\!\!\!\!\!\int_{Q} y^2 e^{-2s\tilde{\Phi}}\,dx\,dt
\leq R^2.
\end{equation*}

It follows that $\Lambda(X_{s,R}) \subset X_{s,R}$.
Furthermore, let $\{w_n\}$ be a sequence of $X_{s,R}$. The regularity assumption on $y_0$
and Theorem \ref{prop}, imply that the associated solutions $\{y_n\}$ are bounded in  $H^1\big(0,T; L^2(0,1)\big) \cap L^2\big(0,T;D(A)\big)$.
Therefore, $\Lambda(X_{s,R})$ is a relatively compact subset of $L^2(Q)$ by
the Aubin-Lions Theorem \cite{simon}.

In order to conclude, we have to prove that $\Lambda$ is upper-semicontinuous under the $L^2$ topology.
First, observe that for any $w\in X_{s,R}$, we have at least $u \in L^2(Q)$ such that the corresponding solution $y\in X_{s,R}$.
Hence, taking $\{w_n\}$ a sequence in $X_{s,R}$, we can find a sequence of controls $\{u_n\}$ such that the corresponding solutions
$\{y_n\}$ is in $L^2(Q)$. Thus, let $\{w_n\}$ be a sequence satisfying $w_n \rightarrow w$ in $X_{s,R}$ and
$y_n \in \Lambda(w_n)$ such that $y_n \rightarrow y$ in $L^2(Q)$. We must prove that
$y \in \Lambda(w)$.
For every $n$, we have a control $u_n \in L^2(Q)$ such that the system
\begin{equation}\label{sys_n}
\left\{
\begin{array}{lll}
\displaystyle y_{n,t} - y_{n,xx} -\frac{\mu}{x^2}y_n = \int_0^t a(t,s,x)  w_n(s,x)\,ds +1_{\omega} u_n, &  & (t, x) \in Q,\\
y_n(t, 0)= y_n(t,1)= 0, & & t \in (0,T),  \\
y_n(0,x)= y_{0}(x),  & & x \in  (0,1)
\end{array}
\right.
\end{equation}
has a least one solution $y_n\in L^2(Q)$ that satisfies
\begin{equation*}
y_n(T,\cdot)=0 \qquad \text{in}\quad (0,1).
\end{equation*}
From Theorem \ref{prop} and \eqref{starstima}, it follows (at least for a subsequence) that
\begin{align*}
 u_n \rightarrow u \quad &\text{weakly in} \; L^2(Q),\notag \\
 y_n \rightarrow y \quad &\text{weakly in} \; H^1\big(0,T; L^2(0,1)\big) \cap L^2\big(0,T;D(A)\big),\\
 &\text{strongly in} \; C(0, T; L^2(0,1)).
\end{align*}
Passing to the limit in \eqref{sys_n}, we obtain a control $u\in L^2(Q)$ such that the corresponding solution $y$ to \eqref{problem20}
satisfies \eqref{ncresult}. This shows that $y\in \Lambda(w)$ and, therefore, the map $\Lambda$ is upper-semicontinuous.

Hence, the multivalued map $\Lambda$ possesses at least one fixed point, i.e., there exists $y \in X_{s,R}$ such that $y\in \Lambda(y)$.
By the definition of $\Lambda$, this implies that there exists at least one pair $(y,u)$ satisfying the conditions of Theorem \ref{mainresult}.
The uniqueness of $y$ follows by Proposition \ref{well-posed}.
This ends the proof of Theorem \ref{mainresult}.

\end{proof}
As a consequence of the previous theorem one has the next result.
\begin{theorem}\label{Thm_null_Control_memo_2}
Assume that $\mu\leq \ds\frac{1}{4}$. If the function $a$ satisfies \eqref{conditiona}, then for any $y_0 \in L^2(0,1)$, there exists a control function $u\in L^2(Q)$ such that the associated solution $y\in \mathcal{W}$ of \eqref{problem} satisfies
\begin{equation*}
y(T,\cdot)=0 \qquad \text{in}\quad (0,1).
\end{equation*}
\end{theorem}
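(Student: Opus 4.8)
The plan is to reduce the case $y_0 \in L^2(0,1)$ to the already-settled case $y_0 \in H_0^{1,\mu}(0,1)$ of Theorem \ref{mainresult}, by first exploiting the smoothing effect of the singular parabolic flow on a short initial interval and then gluing two solutions. First I would let the equation run uncontrolled on $(0,\frac{T}{2})$: by Proposition \ref{well-posed}, the solution $y$ of \eqref{problem} with $u\equiv 0$ and datum $y_0$ belongs to $C([0,\frac{T}{2}];L^2(0,1))\cap L^2(0,\frac{T}{2};H_0^{1,\mu}(0,1))$. In particular $y(t)\in H_0^{1,\mu}(0,1)$ for a.e.\ $t$, so I can fix some $\tau\in(0,\frac{T}{2})$ with $y_1:=y(\tau)\in H_0^{1,\mu}(0,1)$. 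This furnishes the regular datum from which to launch the controlled phase.

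On the remaining interval $(\tau,T)$ I would split the memory term into its already-determined past part $g(t,x):=\int_0^\tau a(t,s,x)y(s,x)\,ds$ and the genuine future memory $\int_\tau^t a(t,s,x)y(s,x)\,ds$, so that on $(\tau,T)$ the problem reads as a singular heat equation with memory, initial datum $y_1\in H_0^{1,\mu}(0,1)$, and an additional known source $g$. Since the building block Theorem \ref{firstresult} already accommodates a source $f$ subject to $e^{-s\tilde\varphi}f\in L^2(Q)$, the Kakutani fixed-point construction carried out in the proof of Theorem \ref{mainresult} adapts verbatim to $(\tau,T)$ with the extra source $g$ added to the memory contribution; this yields a control $\hat u\in L^2$ and a solution $\hat y\in\mathcal{Z}$ on $(\tau,T)$ with $\hat y(T,\cdot)=0$. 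Finally I would set $u:=0$ on $(0,\tau)$ and $u:=\hat u$ on $(\tau,T)$; by the uniqueness in Proposition \ref{well-posed}, the function equal to $y$ on $(0,\tau)$ and to $\hat y$ on $(\tau,T)$ is exactly the solution of \eqref{problem} associated with $y_0$ and $u$, it lies in $\mathcal{W}$, and it vanishes at $t=T$, which is the claim. Note that the drop in regularity (from $\mathcal{Z}$ in Theorem \ref{mainresult} to $\mathcal{W}$ here) is precisely what one expects from merely $L^2$ data.

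The main obstacle is the memory tail: restarting at $\tau$ does not erase the history over $(0,\tau)$, which resurfaces as the source $g$, and one must check that $g$ is compatible with the weighted controllability machinery, namely that $e^{-s\tilde\varphi}g\in L^2$. This is exactly where condition \eqref{conditiona} enters. As in the estimate controlling $\iint_Q\big(e^{-s\tilde\varphi}\int_0^t a\,w\,ds\big)^2$ in the proof of Theorem \ref{mainresult}, one uses $e^{-s\tilde\varphi(t,x)}\leq e^{\frac{sC_0}{(T-t)^k}}$ with $C_0=\frac{4^k\mathfrak{c}d}{T^k}$ together with the matching decay $|a(t,s,x)|\leq M\,e^{-\frac{4^ks\mathfrak{c}d}{T^k(T-t)^k}}$ imposed by \eqref{conditiona}, so that $e^{-s\tilde\varphi(t,x)}|a(t,s,x)|$ stays bounded as $t\to T^-$; since $y$ restricted to $(0,\tau)$ is bounded in $L^2$, this forces $e^{-s\tilde\varphi}g\in L^2$ and the argument closes. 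A secondary point to verify is that the Carleman-based construction on $(\tau,T)$ is legitimate, which it is because $\tau<\frac{T}{2}$ places the left endpoint inside the flat region of the modified weight $\nu$, so no blow-up occurs there.
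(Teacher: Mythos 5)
Your argument is correct and is essentially the paper's own proof: run the uncontrolled memory equation on $(0,T/2)$ and use the regularity from Proposition \ref{well-posed} to pick a time $t_0<T/2$ at which the state lands in $H_0^{1,\mu}(0,1)$, then apply the Kakutani-based result of Theorem \ref{mainresult}, transplanted to $(t_0,T)\times(0,1)$, starting from that regular datum, and finally glue the two pieces with $u=0$ on $[0,t_0]$. If anything, you are more explicit than the paper on the one delicate point --- that the history over $(0,t_0)$ survives on the control interval as a known source $\int_0^{t_0}a(t,s,x)y(s,x)\,ds$ whose compatibility with the weighted framework of Theorem \ref{firstresult} must be checked via \eqref{conditiona} --- a point the paper subsumes under the phrase that Theorem \ref{mainresult} ``holds also in a general domain $(t_0,T)\times(0,1)$ with suitable changes.''
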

\begin{proof}
Consider the following singular parabolic problem:
\begin{equation*}
\left\{
\begin{array}{lll}
w_t - w_{xx} - \ds\frac{\mu}{x^2}w = \int_{0}^{t} a(t,s,x)  w(s,x) \, ds  &  & (t, x) \in \left(0, \ds\frac{T}{2}\right)\times (0, 1), \\
w(t, 0) = w(t,1)= 0, & & t \in\left(0, \ds\frac{T}{2}\right),  \\
w(0,x)= y_{0}(x),  & & x \in  (0, 1),
\end{array}
\right.
\end{equation*}
where $y_0\in L^2(0,1)$ is the initial condition in \eqref{problem}.

By Theorem \ref{prop}, the solution of this system belongs to
$$ \mathcal{W}\left(0,\frac{T}{2}\right) := L^2\left(0, \frac{T}{2}; H_{0}^{1,\mu}(0,1)\right)\cap C\left(\left[0, \frac{T}{2}\right]; L^2(0,1)\right).$$
Then, there exists $t_0 \in (0, \frac{T}{2})$ such that
$w(t_0, \cdot) := \tilde{w}(\cdot) \in H_{0}^{1,\mu}(0,1)$.

Now, we consider the following controlled parabolic system:
\begin{equation*}
\left\{
\begin{array}{lll}
z_t - z_{xx} - \ds\frac{\mu}{x^2} z= \int_{0}^{t} a(t,s,x) z(s,x)\,ds + 1_{\omega} h &  & (t, x) \in (t_0, T) \times (0, 1), \\
z(t, 0) =z(t,1)= 0, & & t \in (t_0, T),  \\
z(t_0,x)= \tilde{w}(x),  & & x \in  (0, 1).
\end{array}
\right.
\end{equation*}
We start by observing that, since Theorem \ref{mainresult} holds also in a general domain $(t_0,T)\times(0,1)$ with suitable changes,
we can see that there exists a control function  $h \in L^2((t_0, T) \times (0, 1))$ such that the associated solution
$$z \in \mathcal{Z}(t_0,T) :=  L^2(t_0, T; D(A))\cap H^1(t_0, T; L^2(0,1))\cap C\left(\left[t_0, T\right]; H_{0}^{1,\mu}(0,1)\right)$$
satisfies
$$z(T, \cdot) = 0 \qquad \text{in} \; (0, 1).$$
Finally, setting
\begin{align*}
y:=
\left\{
\begin{array}{lll}
w, & \text{in} \quad \big[0, t_0\big],\\
z, & \text{in} \quad \big[t_0, T\big]
\end{array}
\right.
 \quad  \text{and} \quad u:=
\left\{
\begin{array}{lll}
0, & \text{in} \quad \big[0, t_0\big],\\
h, & \text{in} \quad \big[t_0, T\big],
\end{array}
\right.
\end{align*}
one can prove that $y\in \mathcal{W}$ is the solution to the system \eqref{problem} corresponding to $u$ and satisfies
$$y(T, \cdot) = 0 \qquad \text{in} \; (0, 1).$$
Hence, our assertion is proved.
\end{proof}


\end{document}